\colorlet{rowcolor}{gray!10}
\newcommand\midrulecolor{%
  \arrayrulecolor{white}\specialrule{\aboverulesep}{0pt}{0.15\baselineskip}%
  \arrayrulecolor{black}\specialrule{\lightrulewidth}{0pt}{0pt}%
  \arrayrulecolor{rowcolor}\specialrule{\belowrulesep}{0pt}{0.15\baselineskip}%
  \arrayrulecolor{black}%
}
\newcommand\bottomrulecolor{%
  \arrayrulecolor{rowcolor}\specialrule{\aboverulesep}{0pt}{0pt}%
  \arrayrulecolor{black}\specialrule{\heavyrulewidth}{0pt}{\belowbottomsep}%
}
\renewcommand{\thefootnote}{\fnsymbol{footnote}}
\titleformat*{\section}{\large\bfseries}
\titleformat*{\subsection}{\normalsize\bfseries}
\newlength{\VerticalSpaceAfterParagraph}
\titlespacing*{\paragraph}{0pt}{\VerticalSpaceAfterParagraph}{1em}
\setlist
  {
    topsep = 5.0pt plus 2.0pt minus 3.0pt,
    partopsep = 1.5pt plus 1.0pt minus 1.0pt,
    parsep = 2.5pt plus 1.25pt minus 0.5pt,
    itemsep = 0pt plus 1.25pt minus 0.5pt
  }
\theoremstyle{plain}
\newtheorem{theorem}{Theorem}
\newtheorem{proposition}[theorem]{Proposition}
\newtheorem{lemma}[theorem]{Lemma}
\newtheorem{corollary}[theorem]{Corollary}
\theoremstyle{definition}
\newtheorem{definition}[theorem]{Definition}
\newtheorem{notation}[theorem]{Notation}
\newtheorem{setting}[theorem]{Setting}
\theoremstyle{remark}
\newtheorem{remark}[theorem]{Remark}
\numberwithin{theorem}{section}
\numberwithin{equation}{section}
\newcommand\blankfootnote[1]
      \renewcommand\thefootnote{}%
\newcommand*\abs[1]{\left\lvert #1 \right\rvert}
\newcommand\Thanks{Competing Interests: The first author was supported by the Simons Investigator Award HMS, National Science Fund of Bulgaria, National Scientific Program ``Excellent Research and People for the Development of European Science'' (VIHREN), Project No.~KP-06-DV-7 and this work is supported by SFB~195 No.~286237555 of DFG. The second author was partially supported by the Engineering and Physical Sciences Research Council (EPSRC) Grant EP/V048619/1 ``K\"ahler-Einstein metrics on Fano Manifolds'' and EP/V056689/1 ``The Calabi Problem for smooth Fano threefolds''.}
\title{Log canonical thresholds of high multiplicity reduced plane curves}
\author{Erik Paemurru, Nivedita Viswanathan}
\date{5th~September 2025}
\newcommand\keywords{Log Canonical Thresholds, Plane Curves, Singularities} % quintic
\newcommand\subjclass{14B05, 14E15, 14H20, 14H50, 32S25}
\begin{document}

\maketitle

\begin{abstract}
We compute log canonical thresholds of reduced plane curves of degree $d$ at points of multiplicity $d-1$. As a consequence, we describe all possible values of log canonical threshold that are less than $2/(d-1)$ for reduced plane curves of degree $d$. In addition, we compute log canonical thresholds for all reduced plane curves of degree less than 6.
\blankfootnote{\textup{2020} \textit{Mathematics Subject Classification} \subjclass.}
\blankfootnote{\textit{Keywords}: \keywords.}
\blankfootnote{\Thanks{}}
\end{abstract}

\tableofcontents

\section{Introduction}

A classical question in singularity theory is to understand the complexity of the singularities of hypersurfaces. One invariant of hypersurface singularities is the \textit{log canonical threshold} which is determined by any resolution of the singularity. Log canonical thresholds appear in different problems pertaining to differential and algebraic geometry. For instance, the greatest root of the Bernstein polynomial is the negative of the log canonical threshold (see \cite[Theorem 10.6]{Kol97}). Another important application is in establishing the existence of K\"ahler-Einstein metrics on Fano varieties using Tian's criterion which is an asymptotic version of the log canonical threshold (\cite{Tian87}). In this regard, log canonical thresholds play a role in stability problems (see for example \cite{Zan22}).

Log canonical thresholds were first known as \textit{complex singularity exponents}. The log canonical threshold of a convergent power series $f \in \mathbb C\{x_1, \ldots, x_n\}$ can be defined as the supremum over real numbers $\lambda$ such that the integral of $1/\abs{f}^{2\lambda}$ converges around~$\bm 0$. The properties of log canonical thresholds have been studied with regards to the mixed Hodge structures of the vanishing cohomology (see for instance \cite{Stb77}, \cite{Var81}). This was then used in \cite{Stb77} to define an invariant called the \textit{spectrum} of an isolated hypersurface singularity, which was further generalised in \cite{Stb89} to any hypersurface singularity. By \cite[Section 1]{Stb89}, in the case of isolated hypersurface singularities, the spectral numbers of the singularity can be used to retrieve the Milnor number. By \cite[Section~4]{Var82} or \cite[Theorem~9.5]{Kol97}, the log canonical threshold of $f$ is $\min(1, \beta_\mathbb C(f))$, where $\beta_\mathbb C(f)$ is the complex singular index, and by \cite{Stb85}, the smallest spectral number in the spectral sequence is $\beta_\mathbb C(f) - 1$.

In this paper, we study log canonical thresholds of reduced plane curves $C \subset \mathbb{A}^2$ at a point $P \in C$, where \emph{reduced} means that the defining polynomial of the curve is not divisible by the square of any non-unit polynomial. The pair $(\mathbb{A}^2, C)$ is said to be \emph{log canonical} at $P$ if it has a \textit{log resolution} over $P$ such that locally the coefficients of all the prime divisors of the log pullback of $C$ are at most~$1$ (Definition~\ref{def:lct}). The \emph{log canonical threshold} of $C$ at $P$ is then given by
\[
\operatorname{lct}_P(\mathbb{A}^2, C) := \operatorname{sup} \mleft\{ \lambda \in \mathbb Q_{>0} \;\middle|\; (\mathbb{A}^2, \lambda C)\ \text{is log canonical at the point $P$} \mright\}.
\]

Log canonical threshold is roughly the reciprocal of multiplicity (\cite[Lemma~8.10.1]{Kol97} or \cite[Exercise~6.18 and Lemma~6.35]{KSC04}):
\begin{equation} \label{eqn:bounding lct using multiplicity}
\frac{1}{\operatorname{mult}_P(C)} \leq \operatorname{lct}_P(\mathbb{A}^2, C) \leq \frac{2}{\operatorname{mult}_P(C)}.
\end{equation}
Log canonical threshold is a finer invariant than multiplicity and Milnor number in the case of reduced plane curves of degree~5: there are 5 possible multiplicities (1--5), 16 possible Milnor numbers (0--14 and 16) and 24 possible log canonical thresholds (Table~\ref{tab:lct list}).

Equation~\eqref{eqn:bounding lct using multiplicity} shows that a high multiplicity corresponds to a low log canonical threshold. To put it more sharply, by \cite[Theorem~4.1]{Che01}, the least log canonical threshold of a reduced plane curve of degree $d$ is~$2/d$, which happens precisely when the multiplicity at the point is~$d$, implying that the curve is the union of $d$ lines. By \cite{Che17} or \cite[Proposition~4.5]{Vis20}, if $\operatorname{mult}_P(C) \leq d-2$, then $\operatorname{lct}_P(\mathbb A^2, C) \geq 2/(d-1)$. Therefore, the lowest log canonical thresholds, meaning the values between $2/d$ and~$2/(d-1)$, happen when the multiplicity at the point is high, meaning at least~$d-1$.

Our main result is giving a simple formula for the log canonical threshold of a reduced plane curve of degree~$d$ at a point on the curve of multiplicity~$d-1$:

\begin{theorem}[= Theorem~\ref{thm:lct of a curve}] \label{thm:maintheorem}
Let $C$ be a reduced plane curve of degree $d$ and $P$ a point of $C$ of multiplicity~$d-1$. Let $C^1$ be the strict transform of $C$ under the blowup of the plane $\mathbb{A}^2$ at~$P$ and let $E$ be the exceptional divisor. Then
\[
\operatorname{lct}_P(\mathbb{A}^2, C) < \frac{2}{d-1} \iff \exists Q\in C^1\colon \operatorname{mult}_Q(C^1 \cdot E) > \frac{d-1}{2}.
\]
If the point $Q$ exists, then it is unique. In this case,
\[
\operatorname{lct}_P(\mathbb{A}^2, C) = \mleft\{\begin{aligned}
  & \frac{2 \cdot \operatorname{mult}_Q(C^1 \cdot E) - 1}{d \cdot (\operatorname{mult}_Q(C^1 \cdot E) - 1) + 1} && \begin{aligned}
    \text{if $L_Q$ is an irreducible}\\
    \text{component of~$C$,}
  \end{aligned}\\
  & \frac{2 \cdot \operatorname{mult}_Q(C^1 \cdot E) + 1}{d \cdot \operatorname{mult}_Q(C^1 \cdot E)} && \text{otherwise},
\end{aligned}\mright.
\]
where $L_Q$ is the unique line on the affine plane containing $P$ such that its strict transform contains~$Q$.
\end{theorem}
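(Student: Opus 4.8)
The plan is to compute $\operatorname{lct}_P(\mathbb{A}^2,C)$ from an explicit log resolution built by first blowing up $P$ and then iterating point blowups above the points of $C^1\cap E$, using the standard formula $\operatorname{lct}_P=\min_i \frac{a_i+1}{\nu_i}$, where $a_i$ is the discrepancy of the exceptional divisor $E_i$ over $\mathbb{A}^2$ and $\nu_i=\operatorname{ord}_{E_i}(\pi^*C)$. The first blowup gives $a_E=1$ and $\nu_E=\operatorname{mult}_P(C)=d-1$, so $E$ contributes the ratio $\frac{2}{d-1}$, and everything else is local above the points $Q\in C^1\cap E$. Since $\sum_{Q}\operatorname{mult}_Q(C^1\cdot E)=C^1\cdot E=d-1$, at most one $Q$ can satisfy $\operatorname{mult}_Q(C^1\cdot E)>\frac{d-1}{2}$, which yields the asserted uniqueness for free. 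It then remains to show that the chain of blowups above a point $Q$ with $m:=\operatorname{mult}_Q(C^1\cdot E)$ contributes a minimal ratio $<\frac{2}{d-1}$ exactly when $m>\frac{d-1}{2}$, and to identify that ratio.

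First I would pin down the local structure at $Q$. Choosing affine coordinates centred at $P$ with $L_Q=\{y=0\}$ and writing the local equation as $F=F_{d-1}+F_d$, one checks that $\operatorname{mult}_Q(C^1\cdot E)$ equals the multiplicity $m$ of the factor $y$ in the tangent cone $F_{d-1}$, and that $L_Q$ is a component of $C$ if and only if $F_d(1,0)=0$. The key point, and the place where the \emph{reduced} hypothesis enters, is that reducedness forces the relevant leading coefficient to be nonzero: $F_d(1,0)\neq0$ in the non-component case, and the coefficient of $x^{d-1}$ in the top-degree part of the residual curve $C'$ (where $C=L_Q\cup C'$) in the component case. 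This produces a linear term in the local equation, so that $C^1$, respectively the residual branch $C'^1$ of $C^1=L_Q^1\cup C'^1$, is \emph{smooth} at $Q$ and meets $E$ with contact of order exactly $m$, respectively $m-1$, while $L_Q^1$ meets $E$ transversally. In particular no exotic singularities of $C^1$ can occur, and each point $Q$ falls squarely into one of the two cases.

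With the normal form in hand, I would run the two blowup chains. In the non-component case one resolves the order-$m$ tangency of the smooth curve $C^1$ to $E$ by $m$ successive point blowups, each centred on the intersection of $E$ with the newest exceptional curve; the tangency order drops by one at each stage, and the recursions $a_{F_j}=a_E+a_{F_{j-1}}+1$ and $\nu_{F_j}=\nu_E+\nu_{F_{j-1}}+1$ solve (from $a_{F_1}=2$, $\nu_{F_1}=d$) to $a_{F_j}=2j$ and $\nu_{F_j}=jd$, giving ratios $\frac{2j+1}{jd}$ whose minimum at $j=m$ is $\frac{2m+1}{dm}$. In the component case the very first blowup above $Q$ simultaneously separates off the line $L_Q^1$ and lowers the $C'^1$-tangency, after which the same recursion runs for $j=1,\dots,m-1$ but with base $\nu_{F_1}=d+1$ (the extra $+1$ coming from the two branches of $C^1$ through $Q$), so $\nu_{F_j}=jd+1$ and the ratios $\frac{2j+1}{jd+1}$ attain their minimum $\frac{2m-1}{d(m-1)+1}$ at $j=m-1$. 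A short computation shows both expressions are $<\frac{2}{d-1}$ precisely when $m>\frac{d-1}{2}$; combined with the fact that a point with $m\le\frac{d-1}{2}$ contributes a ratio $\ge\frac{2}{d-1}$ and that $E$ itself contributes exactly $\frac{2}{d-1}$, taking the global minimum over the finitely many points of $C^1\cap E$ yields both the stated equivalence and the two formulas.

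The main obstacle is the middle step: establishing the precise local normal form, that is, proving smoothness of $C^1$ (or of $C'^1$) at $Q$ together with the exact contact orders with $E$, and verifying that throughout each chain the centre of every blowup remains the intersection of $E$ with the latest exceptional divisor, so that the clean recursions above are valid. Once this local analysis is secured, the discrepancy and multiplicity bookkeeping is routine, and the global conclusion follows by comparing ratios across the points of $C^1\cap E$.
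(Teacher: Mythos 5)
Your proposal is correct and takes essentially the same route as the paper: the paper also constructs the explicit log resolution by blowing up $P$ and then iterating blowups at the intersection point of (the strict transform of) $E$ with the newest exceptional divisor, establishes your local normal form --- smoothness of the relevant branch and the exact contact orders, with square-freeness forcing the key nonvanishing coefficients --- in its Lemma~\ref{lem:relating C and f}, and arrives at the same log-pullback coefficients, i.e.\ the thresholds $(2j+1)/(jd+1)$ and $(2j+1)/(jd)$, before taking the global minimum together with $2/(d-1)$ and using $C^1\cdot E = d-1$ for uniqueness of $Q$. The only difference is bookkeeping: you run discrepancy/multiplicity recursions, whereas the paper tracks explicit polynomial equations through each chart of each blowup.
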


It was proved in \cite[Theorem~1.10]{Che17} that for $d \geq 4$, the five smallest log canonical thresholds are
\[
\mleft\{ \frac{2}{d}, \frac{2d-3}{(d-1)^2}, \frac{2d-1}{d(d-1)}, \frac{2d-5}{d^2-3d+1}, \frac{2d-3}{d(d-2)} \mright\},
\]
and in \cite[Theorem~1.8]{Vis20}, that for $d \geq 5$ the sixth smallest log canonical threshold is $\frac{2d - 7}{d^2 - 4d + 1}$. A keen eye will notice a pattern in the six rational numbers above, namely that they contain two simple subsequences. We show that these subsequences can be extended. Moreover we describe all log canonical thresholds at points of multiplicity $d-1$:

\newcommand\CorollaryText[1]{
  Let $\Lambda_{d, d-1}$ denote the set of log canonical thresholds of pairs $(\mathbb A^2, C)$ at a point of multiplicity $d-1$ of a reduced plane curve $C$ of degree~$d$. Then for every $d \geq 3$,
  \begin{equation} #1
  \begin{aligned}
    \Lambda_{d, d-1} = \mleft\{ \frac{2}{d-1} \mright\} & \cup \mleft\{ \dfrac{2k + 1}{kd + 1} \;\middle|\; k \in \Bigl\{\bigl\lfloor\frac{d-1}{2}\bigr\rfloor, \ldots, d-2\Bigr\} \mright\}\\
    & \cup \mleft\{ \dfrac{2k + 1}{kd} \;\middle|\; k \in \Bigl\{\bigl\lfloor\frac{d+1}{2}\bigr\rfloor, \ldots, d-1\Bigr\} \mright\},
  \end{aligned}
  \end{equation}
  where $\lfloor x\rfloor$ denotes the greatest integer not greater than~$x$.
}

\begin{corollary}[= Corollary~\ref{cor:possible lct's}] \label{thm:corollary in intro}
\CorollaryText{\nonumber}
\end{corollary}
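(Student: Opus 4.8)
The plan is to deduce the Corollary directly from Theorem~\ref{thm:maintheorem} together with the multiplicity bound~\eqref{eqn:bounding lct using multiplicity}. Concretely, I would prove the two inclusions separately: first $\Lambda_{d,d-1}$ is contained in the asserted set, and then every value on the right-hand side is realised by an explicit curve. Throughout I write $m = \operatorname{mult}_Q(C^1 \cdot E)$ for the relevant intersection multiplicity, and I note the bookkeeping identity that the two formulae of Theorem~\ref{thm:maintheorem} become $\frac{2k+1}{kd+1}$ after the substitution $k = m-1$ (line-component case) and $\frac{2k+1}{kd}$ after the substitution $k = m$ (other case).

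For the inclusion ``$\subseteq$'', I would fix a reduced curve $C$ of degree $d$ with $\operatorname{mult}_P(C) = d-1$. If there is no point $Q \in C^1$ with $m > (d-1)/2$, then Theorem~\ref{thm:maintheorem} forces $\operatorname{lct}_P(\mathbb A^2, C) \ge 2/(d-1)$, while \eqref{eqn:bounding lct using multiplicity} forces $\operatorname{lct}_P(\mathbb A^2, C) \le 2/(d-1)$, so the value is exactly $2/(d-1)$. Otherwise the unique $Q$ exists and the two formulae give values of the two stated shapes; the only remaining task is to pin down the range of $k$. Since $\sum_{Q'} \operatorname{mult}_{Q'}(C^1 \cdot E) = \operatorname{mult}_P(C) = d-1$, one always has $m \le d-1$; and in the line-component case, writing $C = L_Q \cup C'$ with $\operatorname{mult}_P(C') = d-2$ and using that $L_Q^1$ meets $E$ transversally, one gets $m = 1 + \operatorname{mult}_Q(C'^1 \cdot E) \le 1 + (d-2) = d-1$. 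Combining $m \le d-1$ with the defining inequality $m > (d-1)/2$, a short floor computation yields $k \in \{\lfloor (d-1)/2\rfloor, \ldots, d-2\}$ and $k \in \{\lfloor (d+1)/2\rfloor, \ldots, d-1\}$ in the two cases respectively.

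For the reverse inclusion I would exhibit one curve per value. For $2/(d-1)$, take $d-1$ distinct lines through $P$ together with one line avoiding $P$: the tangent cone then has $d-1$ simple roots, so no $Q$ has $m > (d-1)/2$ when $d \ge 3$. For $\frac{2k+1}{kd}$ with $\lfloor (d+1)/2\rfloor \le k \le d-1$, take $f = y^k \prod_{i=1}^{d-1-k}(y - a_i x) + x^d$ with distinct nonzero $a_i$: the direction $y=0$ has tangent multiplicity $k$, the other directions are simple, $y=0$ is not a component because $f(x,0) = x^d$, and a chart computation on the blowup gives $m = k$. For $\frac{2k+1}{kd+1}$ with $\lfloor (d-1)/2\rfloor \le k \le d-2$, take $f = y\bigl(y^k \prod_{i=1}^{d-2-k}(y - a_i x) + x^{d-1}\bigr)$, so that $L_Q = \{y=0\}$ is a component and the two branches of $C^1$ over $Q$ contribute $1 + k = m$.

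The main obstacle is the reverse inclusion. For each family I must check that the constructed $f$ is reduced, that $P$ has multiplicity exactly $d-1$, that $Q$ is the unique point with $m > (d-1)/2$, and—most delicately—compute $m$ correctly in the blowup chart. The subtle point is the line-component family, where $C^1$ is reducible at $Q$: there $m$ is the sum of the transversal contribution of $L_Q^1$ and the tangency contribution of $C'^1$, rather than the root multiplicity of a single smooth branch, so the naive ``multiplicity of the tangent direction'' reading must be replaced by a careful local intersection count. By contrast, the ``$\subseteq$'' direction is essentially bookkeeping once Theorem~\ref{thm:maintheorem} and the bound $m \le d-1$ are in hand.
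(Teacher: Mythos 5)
Your proposal is correct and follows essentially the same route as the paper: both directions are deduced from Theorem~\ref{thm:maintheorem} (your $m$ versus the paper's $k_Q$ is only a bookkeeping shift), the forward inclusion combines the theorem with the bound~\eqref{eqn:bounding lct using multiplicity} and the intersection count $\sum_{Q'}\operatorname{mult}_{Q'}(C^1\cdot E)=d-1$, and the reverse inclusion is settled by exhibiting curves realising each value. The only difference is one of explicitness: the paper dismisses the reverse inclusion as ``easy to construct examples using Lemma~\ref{lem:relating C and f}'', whereas you write down the families $y^k\prod(y-a_ix)+x^d$ and $y\bigl(y^k\prod(y-a_ix)+x^{d-1}\bigr)$ and correctly identify the delicate point (the contribution $m=k+1$ from the two branches in the line-component case), which is a legitimate instantiation of the same idea.
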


Lastly, we concentrate on low degree curves. Singularities of low degree plane curves have been intensively studied from various points of view. We fill a gap in this direction by computing the log canonical thresholds for all reduced plane curves of degree at most~$5$. By \cite{Var82}, log canonical threshold is constant in $\mu$-constant strata (Definition~\ref{def:normal forms}), meaning that all the power series in a connected component of set of the power series with given Milnor number have the same log canonical threshold at the origin. Therefore, we use the existing classification lists of singularities to compute log canonical thresholds.

\begin{table}[h!]
\centering
\caption{Log canonical thresholds of reduced plane curves\label{tab:lct list}}
\begin{tabular}{cSc}
  \toprule
  degree & $\operatorname{lct}_P(\mathbb{A}^n, C)$\\
  \midrulecolor
  \rowcolor{rowcolor}
  $1$ & $1$\\
  $2$ & $1$\\
  \rowcolor{rowcolor}
  $3$ & $1,~ \frac{5}{6},~ \frac{3}{4},~ \frac{2}{3}$\\
  $4$ & $1,~ \frac{5}{6},~ \frac{3}{4},~ \frac{7}{10},~ \frac{2}{3},~ \frac{9}{14},~ \frac{5}{8},~ \frac{3}{5},~ \frac{7}{12},~ \frac{5}{9},~ \frac{1}{2}$\\
  \rowcolor{rowcolor}
  $5$ & $\begin{gathered}
    1,~ \tfrac{5}{6},~ \tfrac{3}{4},~ \tfrac{7}{10},~ \tfrac{2}{3},~ \tfrac{9}{14},~ \tfrac{5}{8},~ \tfrac{11}{18},~ \tfrac{3}{5},~ \tfrac{13}{22},~ \tfrac{7}{12},~ \tfrac{15}{26},\\
    \tfrac{4}{7},~ \tfrac{9}{16},~ \tfrac{5}{9},~ \tfrac{11}{20},~ \tfrac{6}{11},~ \tfrac{8}{15},~ \tfrac{1}{2},~ \tfrac{7}{15},~ \tfrac{5}{11},~ \tfrac{9}{20},~ \tfrac{7}{16},~ \tfrac{2}{5}
  \end{gathered}$\\
  \bottomrulecolor
\end{tabular}
\end{table}

Table~\ref{tab:lct list} gives an exhaustive list of all possible log canonical thresholds of reduced plane curves $C$ of degree at most $5$ at a given point $P$.

The paper is organised as follows. In section~\ref{sec:prelim}, we set up the preliminary definitions and results that are needed to describe log canonical thresholds of reduced plane curves, such as the notion of power series and log resolutions. Section~\ref{sec:high multiplicity curves} is devoted towards proving Theorem~\ref{thm:maintheorem}(= Theorem~\ref{thm:lct of a curve}). In addition to this, we also provide the complete list of all possible values of log canonical thresholds that a pair $(\mathbb{A}^2,C)$ can take at a point $p\in C$ of multiplicity $d-1$ in a curve $C$ of degree $d$. This is Corollary~\ref{cor:possible lct's}.

Section~\ref{sec:low degree curves} contains Table~\ref{tab:low degree singularities} which lists all the singularities that reduced plane curves of degree at most $5$ can have and Table~\ref{tab:normal form notation} which lists the normal forms and the log canonical thresholds for each singularity. It also contains the proofs for why the tables give an exhaustive list.

\section{Preliminaries} \label{sec:prelim}

\begin{notation}
To avoid possible misunderstanding, we list some of the standard notation we use.
\begin{enumerate}
\item Variety --- an integral separated scheme of finite type over the complex numbers~$\mathbb C$.
\item Curve --- a reduced separated scheme of finite type over~$\mathbb C$ of pure dimension~$1$.
\item $\mathbb C\{x_1, \ldots, x_n\}$ --- the $\mathbb C$-algebra of power series in variables $x_1, \ldots, x_n$ that are absolutely convergent in a neighbourhood of the origin.
\item $(\mathbb{V}(f), \bm 0)$ --- the (possibly nonreduced) complex space subgerm of $(\mathbb C^n, \bm 0)$ defined by~$f \in \mathbb C\{x_1, \ldots, x_n\}$.
\item $f$ is square-free --- no square of a non-unit in $\mathbb C[x_1, \ldots, x_n]$ divides $f$.
\item Plane curve of degree~$d$ --- a scheme which is isomorphic to an open dense subscheme of $\operatorname{Proj} \mathbb C[x, y, z] / (f)$ for a square-free polynomial $f \in \mathbb C[x, y, z]$ homogeneous of degree~$d$, where $d$ is a positive integer.
\end{enumerate}
Given a positive integer~$n$, a nonzero convergent power series $f \in \mathbb C\{x_1, \ldots, x_n\}$ and positive rational numbers $(w_1, \ldots, w_n)$ called \emph{weights} corresponding to the variables $x_1, \ldots, x_n$, we have the following notation.
\begin{enumerate}[resume]
\item $\operatorname{wt}(f)$ --- the \textbf{weight} of~$f$, defined by
\[
\operatorname{wt}(f) := \min \mleft\{ i_1 w_1 + \ldots + i_n w_n \;\middle|\; \begin{aligned}
  & \text{$i_1, \ldots, i_n \in \mathbb Z_{\geq0}$, the coefficient}\\
  & \text{of $x_1^{i_1} \cdot \ldots \cdot x_n^{i_n}$ in $f$ is non-zero}
\end{aligned} \mright\},
\]
\item $\operatorname{mult}(f)$ ---  the multiplicity of~$f$, defined to be the weight of $f$ with respect to the weights $(1, \ldots, 1)$,
\item $f$ is quasihomogeneous --- all the monomials with a non-zero coefficient in $f$ have the same weight,
\item $f$ is semiquasihomogeneous --- the nonzero quasihomogeneous subpolynomial of $f$ that is of \textit{least weight} (that is, the sum of all the monomials of weight $\operatorname{wt}(f)$ together with their coefficients in~$f$) defines a smooth germ or an isolated singularity at the origin.
\end{enumerate}
Given curves $C$ and $C'$ containing a closed point~$P$, we use the notation below.
\begin{enumerate}[resume]
\item $\operatorname{mult}_P(C)$ --- the multiplicity of $C$ at $P$ is defined to be $\operatorname{mult}(f)$, where $f$ is any convergent power series in $\mathbb C\{x, y\}$ such that the complex space germ $(C^{\mathrm{an}}, P)$ is isomorphic to the complex space subgerm $(\mathbb V(f), \bm 0)$ of $(\mathbb C^2, \bm 0)$, where $C^{\mathrm{an}}$ denotes the analytification of~$C$,
\item $\operatorname{mult}_P(C \cdot C')$ --- the intersection multiplicity of $C$ and $C'$ along~$P$ is defined in \cite[Definition~7.1]{Ful98} and can be computed using \cite[Example~7.1.10(b)]{Ful98}.
\end{enumerate}
\end{notation}

\subsection{Log resolutions}

Below, we give the technical definitions of log resolution and the log pullback $D'$ of a divisor~$D$. The characterising property of log pullback in the setting of Definition~\ref{def:log pullback} is the linear equivalence
\[
K_{S'} + D' \sim \varphi^*(K_S + D).
\]

\begin{definition}[{\cite[Notation~0.4]{KM98}}] \label{def:log pullback}
Let $S$ be a smooth variety. A \textbf{$\mathbb Q$-divisor} on $S$ is a formal $\mathbb Q$-linear combination $\sum \lambda_i D_i$ of prime divisors $D_i$ where $\lambda_i \in \mathbb Q$. An effective $\mathbb Z$-divisor $\sum \lambda_i D_i$ is \textbf{snc} if all the prime divisors $D_i$ are smooth and around every point of~$S$, $\sum \lambda_i D_i$ is locally analytically given by $V(x_1^{a_1} \cdot \ldots \cdot x_n^{a_n})$ in $\mathbb C^n$ where $(a_1, \ldots, a_n) \in \mathbb Z_{\geq0}^n$.

Let $D$ be a $\mathbb Q$-divisor on a smooth variety~$S$. A \textbf{log resolution of $(S, D)$ over~$P$} is a proper birational morphism $\pi\colon S' \to S$ from a scheme $S'$ such that there exists an open neighbourhood $U \subseteq S$ of~$P$ such that $\pi^{-1}U$ is a smooth variety, the exceptional locus $E$ of $\mleft.\pi\mright|_{\pi^{-1}U}$ is of pure codimension $1$ and $E \cup \mleft.\pi\mright|_{\pi^{-1}U}^{-1}(\operatorname{Supp}(D \cap U))$ is an snc divisor of~$\pi^{-1}U$.

For any proper birational morphism $\varphi\colon S' \to S$ from a smooth variety~$S'$, the \textbf{relative canonical divisor} of~$\varphi$, denoted~$K_\varphi$, is the unique $\mathbb Q$-divisor that is linearly equivalent to $\varphi^*(K_S) - K_{S'}$ and supported on the exceptional locus of~$\varphi$, where $K_S$ and $K_{S'}$ are the canonical classes of respectively $S$ and~$S'$. The \textbf{log pullback} of $D$ with respect to $\varphi$ is the $\mathbb Q$-divisor $D' = K_\varphi + \varphi^* D$ on~$S'$.
\end{definition}

Now we are ready to define the log canonical threshold.

\begin{definition}[{\cite[Definition 3.5]{Kol97} or \cite[Definition~2.34]{KM98}}] \label{def:lct}
Let $D$ be a $\mathbb Q$-divisor on a smooth variety $S$ and let $P \in S$ be a point. The pair $(S, D)$ is \textbf{log canonical at~$P$} if we can restrict $(S, D)$ to an open neighbourhood of $P$ such that there exists a log resolution with all the coefficients of the prime divisors in the log pullback of $D$ at most~$1$. The \textbf{log canonical threshold of $(S, D)$ at~$P$} is
\[
\operatorname{lct}_P(S, D) := \operatorname{sup}\bigl\{ \lambda \in \mathbb Q_{>0} \;\big|\; (S, \lambda D) \text{ is log canonical at~$P$} \bigr\}.
\]
\end{definition}

Note that log canonical threshold at a closed point is a local analytic invariant ({\cite[Proposition~4-4-4]{Mat02}}). The following lemma is used to prove Theorem~\ref{thm:maintheorem} and to compute the log canonical thresholds of the polynomials in Table~\ref{tab:normal form notation}.

\begin{lemma}[{\cite[Propositions 8.13 and~8.14 and Remark~8.14.1]{Kol97} or \cite[Proposition~2.1]{Kuw99}}] \label{lem:computing lct}
Let $f \in \mathbb C\{x_1, \ldots, x_n\}$. Assign positive rational weights $\bm w = (w_1, \ldots, w_n)$ to the variables. Let $f_w$ denote the weighted homogeneous leading term of~$f$. Define $b := \sum_i w_i / \operatorname{wt}(f)$. Considering $\mathbb C^n$, $\mathbb{V}(f)$ and $\mathbb{V}(f_{\bm w})$ as complex space germs around~$\bm 0$, we have $\operatorname{lct}_{\bm 0}(\mathbb C^n, \mathbb{V}(f)) \leq b$. Moreover, if the pair $(\mathbb C^n, b \mathbb{V}(f_w))$ is log canonical outside the origin, then $\operatorname{lct}_{\bm 0}(\mathbb C^n, \mathbb{V}(f)) = b$.
\end{lemma}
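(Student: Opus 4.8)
The plan is to use the weighted blowup of $\mathbb C^n$ at the origin with weights $\bm w$. After rescaling all weights by a common denominator, which changes neither $b$ nor $f_w$, I may assume $\bm w = (a_1, \ldots, a_n) \in \mathbb Z_{>0}^n$. Let $\varphi \colon Y \to \mathbb C^n$ be the associated weighted blowup, a proper birational toric morphism whose exceptional locus is a single prime divisor $E \cong \mathbb P(a_1, \ldots, a_n)$. Two standard toric computations underlie everything: first, the relative canonical divisor is $K_\varphi = -(a_1 + \cdots + a_n - 1) E$, so the log discrepancy of $E$ over $\mathbb C^n$ is $\sum_i a_i$; second, the divisorial valuation attached to $E$ is the monomial valuation with $v_E(x_i) = a_i$, so that $v_E(f) = \operatorname{wt}(f)$, and in every toric chart one has $f = u^{\operatorname{wt}(f)}(f_w + u g)$ where $u$ is a local equation of $E$. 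In particular the strict transform $\tilde C$ of $\mathbb V(f)$ meets $E$ exactly along $\{f_w = 0\} \subseteq E$.

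For the upper bound, I compute the log pullback of $\lambda \mathbb V(f)$ with respect to $\varphi$. By Definition~\ref{def:log pullback} it equals $\lambda \tilde C + \bigl(\lambda \operatorname{wt}(f) - (\sum_i a_i - 1)\bigr) E$. Since $E$ lies over the origin, a necessary condition for $(\mathbb C^n, \lambda \mathbb V(f))$ to be log canonical at $\bm 0$ is that the coefficient of $E$ be at most $1$, that is $\lambda \operatorname{wt}(f) \le \sum_i a_i$, i.e.\ $\lambda \le b$. Taking the supremum over all such $\lambda$ yields $\operatorname{lct}_{\bm 0}(\mathbb C^n, \mathbb V(f)) \le b$, which is the first assertion; applied verbatim to $f_w$ it also gives $\operatorname{lct}_{\bm 0}(\mathbb C^n, \mathbb V(f_w)) \le b$, since $\operatorname{wt}(f_w) = \operatorname{wt}(f)$.

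For the second assertion it suffices, in view of the upper bound, to show that the hypothesis forces $(\mathbb C^n, b\,\mathbb V(f))$ to be log canonical at $\bm 0$. With $\lambda = b$ the coefficient of $E$ above is exactly $1$, so the log pullback is $b \tilde C + E$, and I must check that $(Y, b\tilde C + E)$ is log canonical in a neighbourhood of $E$; I do this pointwise on $E$. At a point $q \in E \setminus \tilde C$ the only boundary component through $q$ is the reduced divisor $E$, and the pair $(Y, E)$ is log canonical there, being a toric pair whose boundary $E$ is a reduced torus-invariant divisor. At a point $q \in \tilde C \cap E = \{f_w = 0\} \cap E$, I use that $\mathbb V(f_w)$ is invariant under the $\mathbb G_m$-action with weights $\bm w$ and that $E = (\mathbb C^n \setminus \{\bm 0\})/\mathbb G_m$: the point $q$ corresponds to a $\mathbb G_m$-orbit $O_q \subset \mathbb V(f_w) \setminus \{\bm 0\}$, and the germ of $(Y, b\tilde C + E)$ at $q$ is analytically identified, through the toric chart and the expansion $f = u^{\operatorname{wt}(f)}(f_w + ug)$, with the germ of $(\mathbb C^n, b\,\mathbb V(f_w))$ at a point of $O_q$. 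By hypothesis the latter is log canonical, hence so is the former. Thus $(Y, b\tilde C + E)$ is log canonical near $E$, so $(\mathbb C^n, b\,\mathbb V(f))$ is log canonical at $\bm 0$, giving $\operatorname{lct}_{\bm 0}(\mathbb C^n, \mathbb V(f)) \ge b$ and hence equality.

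The main obstacle is the analytic identification in the last step, namely replacing $f$ by its leading term $f_w$ along $E$. The content is that the difference $f - f_w$ lies in a high power of the ideal of $E$, so it alters $\tilde C$ only in the direction transverse to $E$ and does not change log canonicity of a pair whose boundary already contains $E$ with coefficient $1$. I would make this rigorous by the degeneration $F_t(x) = t^{-\operatorname{wt}(f)} f(t^{a_1} x_1, \ldots, t^{a_n} x_n)$, which satisfies $\mathbb V(F_t) \cong \mathbb V(f)$ for $t \ne 0$ and $F_0 = f_w$, together with lower semicontinuity of the log canonical threshold in this family, yielding $\operatorname{lct}_{\bm 0}(\mathbb C^n, \mathbb V(f_w)) \le \operatorname{lct}_{\bm 0}(\mathbb C^n, \mathbb V(f))$; the homogeneous case, where the $\mathbb G_m$-action reduces log canonicity at $\bm 0$ to the hypothesis together with the borderline $E$-coefficient, then gives $\operatorname{lct}_{\bm 0}(\mathbb C^n, \mathbb V(f_w)) = b$ and closes the argument.
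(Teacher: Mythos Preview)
The paper does not give its own proof of this lemma; it merely cites \cite[Propositions~8.13--8.14]{Kol97} and \cite[Proposition~2.1]{Kuw99}. Your weighted-blowup approach is precisely the one used in those references, and the upper bound and the semicontinuity reduction to $f_w$ are correct as written.

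There is, however, a genuine slip in the step you yourself flag. The germ of $(Y,\, b\tilde C + E)$ at a point $q\in E\cap\tilde C$ is \emph{not} analytically isomorphic to the germ of $(\mathbb C^n,\, b\,\mathbb V(f_w))$ at a point of $O_q$: the first pair carries the extra reduced boundary component $E$, the second does not. What is actually true, and what makes Koll\'ar's argument go through, is that both germs are product pairs with a common $(n-1)$-dimensional factor. In a chart where $E=\{u=0\}$ and the remaining toric coordinates are $y'$, the strict transform of $f_w$ is $g(y'):=f_w(\ldots,1,\ldots)$, independent of $u$; hence $(Y,\,b\tilde C_{f_w}+E)$ is locally $(\mathbb C^{n-1},\,b\{g=0\})\times(\mathbb C,\{0\})$. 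On the other side, near the corresponding orbit point $p$ one has $f_w=(\text{unit})\cdot g(y')$, so $(\mathbb C^n,\,b\,\mathbb V(f_w))$ is locally $(\mathbb C^{n-1},\,b\{g=0\})\times(\mathbb C,\emptyset)$. Log canonicity of each is therefore equivalent to log canonicity of the common factor $(\mathbb C^{n-1},\,b\{g=0\})$, which is exactly the hypothesis. This is the content of inversion of adjunction here, and it is what you need to say in place of the incorrect ``identification''. Your semicontinuity detour is valid for passing from $f$ to $f_w$, but it does not let you avoid this point: the homogeneous case still requires the product/adjunction argument above, not an identification of pairs. One further caveat: if the $a_i$ are not all $1$ then $Y$ has cyclic quotient singularities along $E$, and your chart description only holds in orbifold charts; this is harmless for the log canonicity question but should be acknowledged.
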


\subsection{Power series}

We use the standard definitions below in section~\ref{sec:low degree curves}.

\begin{definition}[{\cite[Definitions I.1.1, I.1.47 and~I.2.1]{GLS07}}]
Let $n$ be a positive integer. We denote by
\begin{itemize}
\item $\mu(f)$ --- the \textbf{Milnor number} of~$f \in \mathbb C\{x_1, \ldots, x_n\}$, defined by
\[
\mu(f) = \dim_\mathbb C \frac{\mathbb C\{x_1, \ldots, x_n\}}{(\frac{\partial f}{\partial x_1}, \ldots, \frac{\partial f}{\partial x_n})}.
\]
\end{itemize}
\end{definition}

\begin{definition}[{\cite[Definition I.2.9]{GLS07} and \cite[Introduction to Part~II]{AGZV85}}]\label{def:righteq}
Let $n \leq m$ be positive integers. Let $f, g \in \mathbb C\{x_1, \ldots, x_n\}$ and $h \in \mathbb C\{y_1, \ldots, y_m\}$.
\begin{itemize}
    \item We say that $f$ and $g$ are \textbf{right equivalent} if there exists an automorphism $\Phi$ of $\mathbb C\{x_1, \ldots, x_n\}$ such that $\Phi(f) = g$. In simpler terms, $f$ and $g$ are right equivalent if they coincide up to local analytic coordinate changes.
    \item We say that $f$ and $h$ are \textbf{stably right equivalent} if there exists a non-negative integer $k$ and an isomorphism $\Psi\colon \mathbb C\{x_1, \ldots, x_{n+k}\} \to \mathbb C\{y_1, \ldots, y_{n+k}\}$ such that $\Psi(f + x_{n+1}^2 + \ldots + x_{n+k}^2) = h + y_{m+1}^2 + \ldots + y_{n+k}^2$.
\end{itemize}
\end{definition}

\begin{remark} \label{rem:stably equiv iff equiv}
Normal forms of singularities are often considered up to stable right equivalence. So, while the power series $x^3 + y^6 + z^2 + 2 xyz$ defines a surface singularity, it is stably right equivalent to the power series $x^2 y^2 + x^3 + y^6$ which defines a curve singularity. Due to this, there are several different notations for the same singularity class (Remark~\ref{rem:multiple notations}(\ref{itm:multiple notations})). Note that if the number of variables is the same, then two power series $f, g \in \mathbb C\{x_1, \ldots, x_n\}$ are stably right equivalent if and only if they are right equivalent, see \cite[Remark in Section~11.1]{AGZV85}.
\end{remark}

\section{High multiplicity curves} \label{sec:high multiplicity curves}

In this section, we classify log canonical thresholds at points of multiplicity $d-1$ for reduced plane curves of degree $d$. The notation we use is given in Setting~\ref{set:curve C}.

\begin{setting} \label{set:curve C}
Let $d \geq 3$ be an integer. Let $P$ be a point of a reduced affine plane curve $C$ of degree $d$ such that $\operatorname{mult}_P C = d-1$. Let $C^1$ be the strict transform of $C$ under the blowup of the affine plane along $P$ with exceptional divisor~$E_1$. For every point $Q \in C^1$ such that $\operatorname{mult}_Q(C^1 \cdot E_1) > 1$, let $L_Q$ be the line on the affine plane through $P$ such that its strict transform passes through~$Q$. For every such $Q \in C^1$, let $C_Q^1$ be the strict transform of the Zariski closure $C_Q$ of $C \setminus L_Q$. Define the positive integer $k_Q$ by
\[
k_Q := \operatorname{mult}_Q(C_Q^1 \cdot E_1)
\]
and define the positive rational number $l_Q$ by
\[
l_Q := \mleft\{\begin{aligned}
  \dfrac{2 k_Q + 1}{k_Q d + 1} & \quad \text{if $L_Q$ is an irreducible component of $C$},\\
  \dfrac{2 k_Q + 1}{k_Q d} & \quad \text{otherwise}.
\end{aligned}\mright.
\]
\end{setting}

By equation~\eqref{eqn:bounding lct using multiplicity}, $\operatorname{lct}_P(\mathbb{A}^2, C) \leq 2/(d-1)$. The main theorem of this section is as follows.

\begin{theorem} \label{thm:lct of a curve}
With the notations and setup as in Setting~\ref{set:curve C},
\[
\operatorname{lct}_P(\mathbb{A}^2, C) < \frac{2}{d-1} \iff \exists Q\in C^1\colon \operatorname{mult}_Q(C^1 \cdot E_1) > \frac{d-1}{2}.
\]
Moreover, if the point $Q$ exists, then it is unique and $\operatorname{lct}_P(\mathbb{A}^2, C) = l_Q$.
\end{theorem}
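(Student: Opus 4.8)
The plan is to use Lemma~\ref{lem:computing lct} after reducing the computation of $\operatorname{lct}_P(\mathbb{A}^2, C)$ to a weighted-homogeneous leading-term calculation in suitable local analytic coordinates at~$P$. Since $\operatorname{mult}_P C = d-1$ and $\deg C = d$, the curve $C$ is cut out by a polynomial whose affine equation, after translating $P$ to the origin, has lowest-degree part of degree $d-1$ and total degree~$d$. The key geometric input is the blowup at~$P$: the points $Q \in C^1 \cap E_1$ correspond to the tangent directions of $C$ at~$P$, and $\operatorname{mult}_Q(C^1 \cdot E_1)$ records how many of the $d-1$ branches (counted with multiplicity) are tangent to the line $L_Q$. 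I would first prove the equivalence: by the lower bound \eqref{eqn:bounding lct using multiplicity} and by \cite[Proposition~4.5]{Vis20} (quoted in the introduction as the statement that multiplicity $\le d-2$ forces $\operatorname{lct} \ge 2/(d-1)$), the interesting regime is exactly when a single tangent line absorbs more than half of the intersection multiplicity, i.e.\ $\operatorname{mult}_Q(C^1\cdot E_1) > (d-1)/2$. Uniqueness of $Q$ is then immediate, since the total intersection number $\sum_{Q} \operatorname{mult}_Q(C^1\cdot E_1) = d-1$ cannot accommodate two points each exceeding $(d-1)/2$.

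Next I would set up coordinates adapted to the distinguished point~$Q$. Choose affine coordinates $(x,y)$ centred at~$P$ so that the line $L_Q$ is $\{y = 0\}$. Writing $f$ for a local defining series of $C$ at~$P$, the condition that $\operatorname{mult}_Q(C^1\cdot E_1) = m$ means that after blowing up (setting $y = t x$, say) the strict transform $C^1$ meets $E_1$ at the point corresponding to $t=0$ with that multiplicity; in the original coordinates this translates into a precise statement about the Newton polygon of~$f$, namely that $f = c\, x^{d-1} + (\text{terms involving } y) + (\text{degree-}d\text{ part})$ with the $y$-adic behaviour controlled by~$m$. The plan is to read off an appropriate weight $\bm w = (w_1, w_2)$ on $(x,y)$ so that the weighted leading term $f_{\bm w}$ captures exactly the branch tangent to $L_Q$ together with the residual $x^{d-1}$ or $x^d$ term. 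Here the two cases of the formula enter: if $L_Q = \{y=0\}$ is itself an irreducible component of~$C$, then $f$ is divisible by $y$ and the residual curve $C_Q$ has degree $d-1$, contributing the $k_Q d + 1$ denominator; otherwise $L_Q$ is merely tangent and the residual structure gives $k_Q d$. In both cases the relation $\operatorname{mult}_Q(C^1\cdot E_1) = k_Q + \varepsilon$ (with $\varepsilon = 1$ when $L_Q \subseteq C$) lets me rewrite $l_Q$ in the form $b = (w_1+w_2)/\operatorname{wt}(f_{\bm w})$ demanded by Lemma~\ref{lem:computing lct}.

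To apply the lemma I must verify its hypothesis that $(\mathbb{C}^2, b\, \mathbb{V}(f_{\bm w}))$ is log canonical away from the origin, which amounts to checking that $f_{\bm w}$ has no worse-than-nodal singularities along the two coordinate axes except at $\bm 0$, i.e.\ that the weighted leading term is semiquasihomogeneous in the sense defined in the Notation. This is where I expect the main obstacle: the weighted leading term $f_{\bm w}$ need not define an isolated singularity for the naive choice of weights, so I would have to argue that the chosen weights are precisely the ones making the Newton boundary pass through the relevant vertex, and then confirm that the only singular point of $\mathbb{V}(f_{\bm w})$ on the punctured plane is a transverse intersection contributing coefficient at most~$1$. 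Concretely, the punctured-plane log canonicity reduces to checking the two factors $x$ (or $x^{d-1}$) and the quasihomogeneous branch meet each other and the axis $\{x=0\}$ only in ordinary ways; this can be done by a direct local analysis at each candidate singular point of $f_{\bm w}$ using the one-variable version of \eqref{eqn:bounding lct using multiplicity} or an explicit auxiliary blowup. Once that verification is complete, Lemma~\ref{lem:computing lct} yields $\operatorname{lct}_P(\mathbb{A}^2, C) = b = l_Q$, and substituting $k_Q$ in terms of $\operatorname{mult}_Q(C^1\cdot E_1)$ gives the two displayed formulae. Finally I would double-check the boundary case $\operatorname{mult}_Q(C^1\cdot E_1) = (d-1)/2$ (possible only for odd $d-1$) to confirm it produces exactly $l_Q = 2/(d-1)$, consistent with the strict inequality in the equivalence.
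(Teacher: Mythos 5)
Your strategy is genuinely different from the paper's: the paper constructs an explicit log resolution by iterated point blowups, tracking the equations of all strict transforms via Lemma~\ref{lem:relating C and f}, and reads $\operatorname{lct}_P$ off the discrepancies of the exceptional divisors, whereas you reduce everything to the weight criterion of Lemma~\ref{lem:computing lct}. That route is viable (the remark following the theorem even points to such a Newton-polyhedron proof), and your weight bookkeeping is essentially right: in the paper's coordinates where $L_Q = \mathbb V(x)$, the weights $w_x = (k_Q+1)/k_Q$, $w_y = 1$ give $f_{\bm w} = y^{d-1-k_Q}\bigl(a_{k_Q}x^{k_Q} + b_0 y^{k_Q+1}\bigr)$ if $L_Q \not\subseteq C$, and $f_{\bm w} = x\, y^{d-2-k_Q}\bigl(a_{k_Q}x^{k_Q} + b_0 y^{k_Q+1}\bigr)$ if $L_Q \subseteq C$, with $b = (w_x+w_y)/\operatorname{wt}(f) = l_Q$ in both cases. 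However, as written there is a genuine gap: you never prove the implication $\operatorname{lct}_P(\mathbb A^2,C) < \frac{2}{d-1} \Rightarrow \exists Q$. Equation~\eqref{eqn:bounding lct using multiplicity} only gives $\operatorname{lct}_P \geq \frac{1}{d-1}$ at a point of multiplicity $d-1$, and \cite[Proposition~4.5]{Vis20} assumes $\operatorname{mult}_P(C) \leq d-2$, so neither citation applies here; declaring this the ``interesting regime'' is not an argument. What is missing is the statement: if every $Q \in C^1 \cap E_1$ satisfies $\operatorname{mult}_Q(C^1\cdot E_1) \leq \frac{d-1}{2}$, then $\operatorname{lct}_P = \frac{2}{d-1}$. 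Your own framework supplies this, but you must carry it out: with weights $(1,1)$ the leading term $f_{\bm w}$ is the tangent cone, a product of lines whose multiplicities are exactly the numbers $\operatorname{mult}_Q(C^1\cdot E_1)$; since these are all at most $\frac{d-1}{2}$, every coefficient of $\frac{2}{d-1}\mathbb V(f_{\bm w})$ is at most $1$, the lines are pairwise disjoint away from the origin, and Lemma~\ref{lem:computing lct} then gives equality.

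Relatedly, your plan for verifying that $\bigl(\mathbb C^2, b\,\mathbb V(f_{\bm w})\bigr)$ is log canonical off the origin looks in the wrong place. In both cases the components of the support (one or two coordinate lines and the irreducible quasihomogeneous curve $a_{k_Q}x^{k_Q}+b_0y^{k_Q+1}=0$) pairwise meet only at the origin, so away from the origin there are no intersection points or singular points of the support at all; checking ``transverse intersections'' is vacuous. The entire content of the verification is the coefficient of the non-reduced line: $l_Q\,(d-1-k_Q) \leq 1$, respectively $l_Q\,(d-2-k_Q) \leq 1$ when $L_Q \subseteq C$, which unwinds to $k_Q \geq \frac{d-1}{2}$, respectively $k_Q \geq \frac{d-3}{2}$, i.e.\ exactly to $\operatorname{mult}_Q(C^1\cdot E_1) \geq \frac{d-1}{2}$. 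This is precisely where the hypothesis of the theorem enters; when it fails, Lemma~\ref{lem:computing lct} yields only the inequality $\operatorname{lct}_P \leq b$ with $b > \frac{2}{d-1}$, which proves nothing. Your proposal never makes this connection, and without it neither direction of the equivalence is actually established. Two smaller slips: the boundary case $\operatorname{mult}_Q(C^1\cdot E_1) = \frac{d-1}{2}$ requires $d-1$ even (i.e.\ $d$ odd), not ``$d-1$ odd''; and in your coordinates ($L_Q = \{y=0\}$) the coefficient of $x^{d-1}$ in $f$ vanishes rather than being the leading datum --- the relevant Newton-polygon vertices are $x^{d-1-m}y^{m}$ together with $x^{d}$ (or, when $L_Q \subseteq C$, the vertex $x^{d-1}$ of $f/y$), and you must also justify that the latter coefficient is nonzero, which holds because otherwise $L_Q$ would be a component of $C$, respectively $y^2$ would divide $f$ contradicting reducedness; this is the condition $b_0 \neq 0$ in Lemma~\ref{lem:relating C and f}.
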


To prove Theorem~\ref{thm:lct of a curve}, we first describe $C$ using equations:

\begin{lemma} \label{lem:relating C and f}
We say that two triples $(C, P, Q)$ and $(C', P', Q')$ are \emph{isomorphic} if there exists an isomorphism $C \to C'$ of curves that takes the point $P$ to $P'$ and $Q$ to~$Q'$. Let $\operatorname{Spec} \mathbb C[x_1, y] \cong \operatorname{Spec} \mathbb C[x/y, y] \gets \mathbb C[x, y]$ be one of the affine opens of the blowup of $\mathbb A^2 := \operatorname{Spec} \mathbb C[x, y]$ at~$\bm 0$. Then, up to isomorphism, the triples $(C, P, Q)$ in Setting~\ref{set:curve C} are precisely given by
\[
\begin{aligned}
  C & = \mathbb V(f) \subseteq \mathbb A^2,\\
  P & = \bm 0 \in \mathbb A^2,\\
  Q & = \bm 0 \in \operatorname{Spec} \mathbb C[x_1, y],
\end{aligned}
\]
where $d \in \mathbb Z_{\geq3}$, $a_i, b_j \in \mathbb C$, $a_{k_Q} \neq 0$, $b_0 \neq 0$, $f$ is square-free and where one of the following holds
\begin{itemize}
\item $L_Q$ is an irreducible component of~$C$, $k_Q \in \{1, \ldots, d - 2\}$ and
\[
  f := x \mleft( \sum_{i\in \{k_Q,\, k_Q + 1,\, \ldots,\, d-2\}} a_i x^i y^{d-2-i} + \sum_{j \in \{0,\, 1,\, \ldots,\, d-1\}} b_j x^j y^{d-1-j} \mright),
\]
or
\item $L_Q$ is not an irreducible component of $C$, $k_Q \in \{2, \ldots, d - 1\}$ and
\[
  f := \sum_{i\in \{k_Q,\, k_Q + 1,\, \ldots,\, d-1\}} a_i x^i y^{d-1-i} + \sum_{j \in \{0,\, 1,\, \ldots,\, d\}} b_j x^j y^{d-j}.
\]
\end{itemize}
In both cases, $L_Q$ and $E_1 \cap \operatorname{Spec} \mathbb C[x_1, y]$ from Setting~\ref{set:curve C} correspond respectively to $\mathbb V(x) \subseteq \mathbb A^2$ and $\mathbb V(y) \subseteq \operatorname{Spec} \mathbb C[x_1, y]$.
\end{lemma}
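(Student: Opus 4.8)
The plan is to translate each condition of Setting~\ref{set:curve C} into statements about a defining polynomial and its tangent cone, and then simply read off the normal form. First I would choose affine coordinates with $P = \bm 0$ and write $C = \mathbb V(F)$ for $F \in \mathbb C[x,y]$. Because $\operatorname{mult}_P C = d-1$ is the vanishing order of $F$ at $\bm 0$ and $\deg C = d$, the polynomial splits as $F = F_{d-1} + F_d$ with $F_j$ homogeneous of degree $j$ and $F_{d-1} \neq 0$ the tangent cone, and reducedness of $C$ is square-freeness of $F$. The computation driving everything is that in the chart $\operatorname{Spec}\mathbb C[x_1,y]$ with $x = x_1 y$ the total transform of $F$ is $y^{d-1}\bigl(F_{d-1}(x_1,1) + y F_d(x_1,1)\bigr)$; hence $E_1 = \mathbb V(y)$, the strict transform is $C^1 = \mathbb V\bigl(F_{d-1}(x_1,1) + y F_d(x_1,1)\bigr)$, and for $Q = (\alpha, 0) \in C^1 \cap E_1$ the number $\operatorname{mult}_Q(C^1 \cdot E_1)$ equals the vanishing order of $F_{d-1}(x_1,1)$ at $x_1 = \alpha$, i.e.\ the multiplicity of the tangent direction $Q$ in $F_{d-1}$.

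Next I would apply a linear automorphism of $\mathbb A^2$ fixing $\bm 0$ to move the tangent direction of $Q$---which by the standing hypothesis of Setting~\ref{set:curve C} has multiplicity $a := \operatorname{mult}_Q(C^1 \cdot E_1) > 1$---onto the line $\mathbb V(x)$. This single normalisation simultaneously places $Q$ at the origin of the chart $\operatorname{Spec}\mathbb C[x_1,y]$, identifies $E_1$ with $\mathbb V(y)$ and $L_Q$ with $\mathbb V(x)$ as the lemma demands, and records the divisibility $x^a \mid F_{d-1}$ with $x^{a+1} \nmid F_{d-1}$.

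I would then split according to whether $L_Q = \mathbb V(x)$ is a component of $C$. If it is, reducedness lets me write $F = x F_Q$ with $x \nmid F_Q$; then $C_Q = \mathbb V(F_Q)$ has degree $d-1$ and multiplicity $d-2$ at $P$, the tangent cone factors as $F_{d-1} = x \cdot (F_Q)_{d-2}$, and comparing $x$-adic orders gives $k_Q = \operatorname{mult}_Q(C_Q^1 \cdot E_1) = a - 1 \in \{1, \dots, d-2\}$. Expanding the degree-$(d-2)$ and degree-$(d-1)$ parts of $F_Q$ in monomials and pulling out the factor $x$ produces precisely the first displayed form, the condition $a_{k_Q} \neq 0$ encoding that $x^{k_Q}$ exactly divides $(F_Q)_{d-2}$. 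If $L_Q$ is not a component, then $x \nmid F$, $C_Q = C$, $k_Q = a \in \{2, \dots, d-1\}$, and writing $F_{d-1} + F_d$ in monomials gives the second form. The converse, that every polynomial of the stated shape defines a triple satisfying Setting~\ref{set:curve C}, follows by running the same computations in reverse.

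The step needing genuine care is the inequality $b_0 \neq 0$. The number $b_0$ is the coefficient of $y^{d-1}$ in $F_Q$ (first case) or of $y^d$ in $F$ (second case), so it is the leading coefficient of the one-variable polynomial $F_Q(0,y)$, respectively $F(0,y)$, whose vanishing order at $y = 0$ is $\operatorname{mult}_P(L_Q \cdot C_Q)$, respectively $\operatorname{mult}_P(L_Q \cdot C)$. Since $a \geq 2$ forces the $y^{d-1}$-coefficient of $F_{d-1}$ to vanish, this restriction reduces to $b_0 y^{d-1}$, respectively $b_0 y^d$, so the intersection number is $d-1$, respectively $d$, exactly when $b_0 \neq 0$. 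Bezout's theorem bounds it by $d-1$, respectively $d$, because $L_Q$ is not a component of $C_Q$, respectively of $C$; and $b_0 = 0$ would make $F_Q(0,y)$, respectively $F(0,y)$, identically zero, forcing $L_Q \subseteq C_Q$, respectively $L_Q \subseteq C$, against reducedness. Hence $b_0 \neq 0$ is forced rather than imposed, and checking this---together with keeping the off-by-one shift between $k_Q = a-1$ and $k_Q = a$ consistent across the two cases---is the main bookkeeping hurdle.
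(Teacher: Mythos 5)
Your proposal is correct and takes essentially the same route as the paper's proof: translate $P$ to $\bm 0$, apply a linear map so that $Q$ becomes the origin of the chart $\operatorname{Spec}\mathbb C[x_1,y]$ (equivalently $L_Q = \mathbb V(x)$), decompose the defining polynomial into homogeneous parts of degrees $d-1$ and $d$, compute the strict transform in that chart, identify $\operatorname{mult}_Q(C^1\cdot E_1)$ with the vanishing order of the degree-$(d-1)$ part, and read off the coefficients. The paper compresses the case split and the necessity of $a_{k_Q}\neq 0$ and $b_0\neq 0$ into one line (``this shows that $g$ is equal to $f$ for some choice of $a_i$ and $b_j$''), whereas you spell these out; this is added detail, not a different method.
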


\begin{proof}
We show how every $(C, P, Q)$ from Setting~\ref{set:curve C} is given by some $(\mathbb V(f), \bm 0, \bm 0)$. First, translate $P$ to~$\bm 0$ on~$\mathbb A^2$. Then use a linear invertible map on $\mathbb A^2$ fixing $\bm 0$ to move $Q$ to $\bm 0 \in \operatorname{Spec} \mathbb C[x_1, y]$. It follows that $L_Q = \mathbb V(x)$ and $E_1 \cap \operatorname{Spec} \mathbb C[x_1, y] = \mathbb V(y)$. The curve $C$ is given by $\mathbb V(g) \subseteq \mathbb A^2$ where $g = g_{d-1} + g_d \in \mathbb C[x, y]$, where $g_{d-1}$ and $g_d$ are respectively homogeneous of degrees $d-1$ and~$d$. The curve $C^1 \cap \operatorname{Spec} \mathbb C[x_1, y]$ is given by $g_{d-1}(x_1, 1) + y g_d(x_1, 1)$. We see that $C^1 \cdot E = \operatorname{mult} g_{d-1}(x_1, 1)$. This shows that $g$ is equal to $f$ for some choice of $a_i$ and~$b_j$.

Conversely, if $(C, P, Q)$ are given by some $(\mathbb V(f), \bm 0, \bm 0)$ as above, then $C$ is a reduced affine plane curve of degree~$d$, $P \in C$ a point of multiplicity $d-1$, $\operatorname{mult}_Q(C_Q^1 \cdot E_1) = k_Q$ and $\operatorname{mult}_Q(C^1 \cdot E_1) > 1$.
\end{proof}

\begin{proof}[Proof of Theorem~\ref{thm:lct of a curve}]
First, it is easy to compute that for any point~$Q$, $l_Q < 2/(d-1)$ if and only if $\operatorname{mult}_Q(C^1 \cdot E_1) > (d-1)/2$. If such a point $Q$ exists, then it is necessarily unique since
\[
\sum_{Q \in C^1 \cap E_1} \operatorname{mult}_Q(C^1 \cdot E_1) = C^1 \cdot E_1 = \operatorname{mult}_P(C) = d-1.
\]

Below, we construct explicit log resolutions of $(\mathbb A^2, C)$ over the point $P$ to then obtain the value of $\operatorname{lct}_P(\mathbb A^2, C)$. Let $\pi_1\colon S_1 \to \mathbb{A}^2$ be the blowup along~$P$. If for every point $Q \in C^1 \cap E_1$ we have that $\operatorname{mult}_Q(C^1 \cdot E_1) = 1$, then $\operatorname{lct}_P(\mathbb{A}^2, C) = 2/(d-1)$. Otherwise, let $Q$ be any point of $C^1$ such that $\operatorname{mult}_Q(C^1 \cdot E_1) > 1$.
 By Lemma~\ref{lem:relating C and f}, $C_Q^1 \cap \operatorname{Spec} \mathbb C[x_1, y]$ is given by
\begin{equation} \label{eqn:strict transform C1 L}
  f_1 := \sum_{i\in \{k_Q,\, k_Q + 1,\, \ldots,\, d-2\}} a_i x_1^i + y \sum_{j \in \{0,\, 1,\, \ldots,\, d-1\}} b_j x_1^j,
\end{equation}
if $L_Q$ is an irreducible component of~$C$ and by
\begin{equation} \label{eqn:strict transform C1 no L}
  f_1 := \sum_{i\in \{k_Q,\, k_Q + 1,\, \ldots,\, d-1\}} a_i x_1^i + y \sum_{j \in \{0,\, 1,\, \ldots,\, d\}} b_j x_1^j.
\end{equation}
otherwise. Every irreducible component of $C$ that is not a line has degree strictly greater than its multiplicity at~$P$. Therefore, the curve $C$ has exactly one irreducible component which is not a line. Therefore, $C_Q^1$ has exactly one irreducible component passing through~$Q$. The strict transform $L_Q^1$ of $L_Q$ under $\pi_1$ is given on $\operatorname{Spec} \mathbb C[x_1, y]$ by~$\mathbb V(x_1)$. Using equations \eqref{eqn:strict transform C1 L} and~\eqref{eqn:strict transform C1 no L},
we see that $L_Q^1$ and $C_Q^1$ intersect in exactly one point, namely~$Q$, and the intersection is transversal. In particular, $Q$ is a smooth point of~$C_Q^1$.

For every rational number~$\lambda > 1/(d-1)$, let $D_\lambda$ denote the effective $\mathbb Q$-divisor
\[
D_\lambda := \lambda C^1 + (\lambda(d-1) - 1) E_1
\]
on $S_1$. Then $D_\lambda$ is the log pullback of $\lambda C$ under~$\pi_1$.

We describe a log resolution $\pi_2 \circ \ldots \circ \pi_{k_Q + 1}$ over~$Q$ of $(S_1, D_\lambda)$.
Let $\pi_2\colon S_2 \to S_1$ be the blowup along~$Q$. For every $r \in \{2, \ldots, k_Q\}$, let $\pi_{r+1}\colon S_{r+1} \to S_r$ be the blowup along the point $Q_r := \mathbb{V}(x_1, y_r)$ of the affine open $\operatorname{Spec} \mathbb C[x_1, y_r] \cong \operatorname{Spec} \mathbb C[x_1, y_{r-1}/x_1]$ of~$S_r$, where $y_1 := y$. Let $C_Q^r$ denote the strict transform of $C_Q$ under $\pi_1 \circ \ldots \circ \pi_r$. We see using equations \eqref{eqn:strict transform C1 L} and \eqref{eqn:strict transform C1 no L} that $C_Q^r \cap \operatorname{Spec} \mathbb C[x_1, y_r]$ is given by
\begin{equation} \label{eqn:strict transform Ci L}
  f_r := \sum_{i\in \{k_Q,\, k_Q + 1,\, \ldots,\, d-2\}} a_i x_1^{i-r+1} + y_r \sum_{j \in \{0,\, 1,\, \ldots,\, d-1\}} b_j x_1^j,
\end{equation}
if $L_Q$ is an irreducible component of~$C$ and by
\begin{equation} \label{eqn:strict transform Ci no L}
  f_r := \sum_{i\in \{k_Q,\, k_Q + 1,\, \ldots,\, d-1\}} a_i x_1^{i-r+1} + y_r \sum_{j \in \{0,\, 1,\, \ldots,\, d\}} b_j x_1^j.
\end{equation}
otherwise. Let $E_r$ be the exceptional divisor of~$\pi_r$ and let $E_i^r$ be the strict transform of the exceptional divisor of $\pi_i$ under $\pi_i \circ \ldots \circ \pi_r$. We have
\[
\begin{aligned}
  E_r \cap \operatorname{Spec} \mathbb C[x_1, y_r] & = \mathbb{V}(x_1),\\
  E_1^r \cap \operatorname{Spec} \mathbb C[x_1, y_r] & = \mathbb{V}(y_r).
\end{aligned}
\]
We find that $E_r$ and $E_1^r$ intersect in exactly one point, namely~$Q_r$, and the intersection is transversal. Moreover, $Q_r \notin E_2^r \cup E_3^r \cup \ldots \cup E_{r-1}^r$ and therefore, $\sum_{i \in \{1, \ldots, r\}} E_i^r$ is snc. Using equations \eqref{eqn:strict transform Ci L} and \eqref{eqn:strict transform Ci no L}, we see that $C_Q^r$ and $E_r$ intersect in exactly one point, namely~$Q_r$, and the intersection is transversal. Therefore, $(E_2^r \cup E_3^r \cup \ldots \cup E_{r-1}^r) \cap C_Q^r$ is empty.  From equations \eqref{eqn:strict transform Ci L} and \eqref{eqn:strict transform Ci no L}, we find
\[
\operatorname{mult}_{Q_r}(C_Q^r \cdot E_1^r) = k_Q - r + 1.
\]
The varieties $C_Q^{k_Q}$, $E_{k_Q}$ and $E_1^{k_Q}$ have pairwise transversal intersections at~$Q_{k_Q}$. Therefore, $\pi_2 \circ \ldots \circ \pi_{k_Q + 1}$ is a log resolution of $(S_1, D_\lambda)$ over~$Q$.

Let $D_\lambda^{k_Q + 1}$ denote the strict transform of $D_\lambda$ under $\pi_2 \circ \ldots \circ \pi_{k_Q + 1}$. The log pullback of $D_\lambda$ under the composition $\pi_2 \circ \ldots \circ \pi_{k_Q + 1}$ is given by
\[
D_\lambda^{k_Q + 1} + \sum_{j \in \{1, \ldots, k_Q\}} (\lambda(jd + 1) - 2j) E_{j+1}^{k_Q + 1}
\]
if $L_Q$ is an irreducible component of $C$ and
\[
D_\lambda^{k_Q + 1} + \sum_{j \in \{1, \ldots, k_Q\}} (\lambda jd - 2j) E_{j+1}^{k_Q + 1}
\]
otherwise.

We have the following equivalences:
\[
\begin{aligned}
  \lambda(d-1) - 1 & \leq 1 & \iff~\, \lambda & \leq \frac{2}{d - 1},\\
  \lambda(jd + 1) - 2j & \leq 1 & \iff~\, \lambda & \leq \frac{2j + 1}{jd + 1},\\
  \lambda jd - 2j & \leq 1 & \iff~\, \lambda & \leq \frac{2j + 1}{jd}.
\end{aligned}
\]
Note that
\[
l_Q = \mleft\{\begin{aligned}
  \min \mleft\{ \frac{2j+1}{jd+1} \;\middle|\; j \in \{1, \ldots, k_Q\} \mright\} & \quad \begin{aligned}
    \text{if $L_Q$ is an irreducible}\\
    \text{component of~$C$,}
  \end{aligned}\\
  \min \mleft\{ \frac{2j+1}{jd} \;\middle|\; j \in \{1, \ldots, k_Q\} \mright\} & \quad \text{otherwise}.
\end{aligned}\mright.
\]

Let $\pi$ be the composition of the blowup $\pi_1$ with the $\sum_Q k_Q$ blowups above, where the sum is over points $Q \in C^1$ such that $\operatorname{mult}_Q(C^1 \cdot E_1) > 1$. Then $\pi$ is a log resolution of $(\mathbb{A}^2, C)$ over~$P$. The log canonical threshold of $(\mathbb{A}^2, C)$ at~$P$ is by definition the greatest positive rational number $\lambda$ such that all the coefficients of the prime divisors in the log pullback of $\lambda C$ with respect to $\pi$ are at most~$1$. Therefore,
\[
\operatorname{lct}_P(\mathbb{A}^2, C) = \min \mleft( \mleft\{\dfrac{2}{d-1}\mright\} \cup \mleft\{ l_Q \;\middle|\; Q \in C^1,~ \operatorname{mult}_Q(C^1 \cdot E_1) > 1 \mright\} \mright).
\qedhere
\]
\end{proof}

\begin{remark}
Theorem~\ref{thm:lct of a curve} can also be proved using a sequence of blowups at points and inversion of adjunction similarly to the proofs of \cite[Theorem~1.10]{Che17} and \cite[Theorem~1.8]{Vis20}, or using the Newton polyhedron of the defining polynomial, see \cite[Theorem~3.10]{Pae24Reading}.

The method adopted in this paper works by giving very explicit equations to the curves under considerations, thus providing an intuition to the patterns of degrees of curves observed upon different blow ups and the computations of log canonical thresholds at the points of blow up.
\end{remark}

\begin{corollary} \label{cor:possible lct's}
\CorollaryText{\label{eqn:Lambda d d-1}}
\end{corollary}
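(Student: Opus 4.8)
The plan is to read the corollary directly off Theorem~\ref{thm:lct of a curve}, converting the inequality on $\operatorname{mult}_Q(C^1 \cdot E_1)$ into the two index sets, and then to check that every listed value is actually attained. First I would record the shape of the possible outputs. The closing formula in the proof of Theorem~\ref{thm:lct of a curve} gives
\[
\operatorname{lct}_P(\mathbb{A}^2, C) = \min\mleft( \mleft\{\tfrac{2}{d-1}\mright\} \cup \mleft\{ l_Q \;\middle|\; Q \in C^1,\ \operatorname{mult}_Q(C^1 \cdot E_1) > 1 \mright\} \mright),
\]
and $l_Q < 2/(d-1)$ holds exactly when $\operatorname{mult}_Q(C^1 \cdot E_1) > (d-1)/2$. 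Since $\sum_{Q} \operatorname{mult}_Q(C^1 \cdot E_1) = d-1$, at most one point can satisfy this strict inequality; hence for any fixed curve either all $l_Q \geq 2/(d-1)$ and $\operatorname{lct}_P = 2/(d-1)$, or there is a unique $Q$ with $l_Q < 2/(d-1)$ and $\operatorname{lct}_P = l_Q$. Thus $\Lambda_{d,d-1}$ is precisely $\{2/(d-1)\}$ together with the collection of values $l_Q$ that can occur subject to $\operatorname{mult}_Q(C^1 \cdot E_1) > (d-1)/2$.

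Next I would translate this threshold condition into the index ranges. As established in the proof of Theorem~\ref{thm:lct of a curve}, the strict transform $L_Q^1$ meets $E_1$ transversally at $Q$, so
\[
\operatorname{mult}_Q(C^1 \cdot E_1) = \begin{cases} k_Q + 1 & \text{if $L_Q$ is an irreducible component of $C$,}\\ k_Q & \text{otherwise.}\end{cases}
\]
In the first case Lemma~\ref{lem:relating C and f} gives $k_Q \in \{1, \ldots, d-2\}$, and $k_Q + 1 > (d-1)/2$ rearranges, via $\lfloor (d-3)/2\rfloor + 1 = \lfloor(d-1)/2\rfloor$, to $k_Q \in \{\lfloor(d-1)/2\rfloor, \ldots, d-2\}$, producing exactly the family $l_Q = (2k_Q+1)/(k_Q d + 1)$. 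In the second case $k_Q \in \{2, \ldots, d-1\}$, and $k_Q > (d-1)/2$ rearranges, via $\lfloor(d-1)/2\rfloor + 1 = \lfloor(d+1)/2\rfloor$, to $k_Q \in \{\lfloor(d+1)/2\rfloor, \ldots, d-1\}$, producing the family $l_Q = (2k_Q+1)/(k_Q d)$. This yields the inclusion of $\Lambda_{d,d-1}$ in the right-hand side.

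For the reverse inclusion I would realize each value through the converse direction of Lemma~\ref{lem:relating C and f} by an explicit choice of coefficients. The value $2/(d-1)$ is attained by a union of $d$ distinct lines, $d-1$ of them through $P$ in pairwise distinct directions, so that every $\operatorname{mult}_Q(C^1 \cdot E_1) = 1$. For a value in the first family with prescribed $k \in \{\lfloor(d-1)/2\rfloor, \ldots, d-2\}$ I would take $f = x\,(x^{k} y^{d-2-k} + y^{d-1} + x^{d-1})$, and for a value in the second family with $k \in \{\lfloor(d+1)/2\rfloor, \ldots, d-1\}$ I would take $f = x^{k} y^{d-1-k} + y^{d} + x^{d}$; in each case $a_{k} \neq 0$, $b_0 \neq 0$, and $\mathbb V(x)$ is respectively a component or not. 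Granting that these $f$ are square-free, Lemma~\ref{lem:relating C and f} identifies the triple with one from Setting~\ref{set:curve C} having $k_Q = k$, the unique distinguished point then has $\operatorname{mult}_Q(C^1 \cdot E_1) > (d-1)/2$, and Theorem~\ref{thm:lct of a curve} returns $\operatorname{lct}_P = l_Q$ as required.

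The main obstacle is the square-freeness underlying this realizability step: the tangent cones of the constructed curves are deliberately non-reduced (a high power of $\mathbb V(x)$), so reducedness of $C$ cannot be seen from the leading form and must be extracted from the degree-$d$ terms. I would dispatch this by verifying that $\mathbb V(f, \partial_x f, \partial_y f)$ is finite for the explicit polynomials above — an elementary Jacobian/resultant check — equivalently that the chosen degree-$d$ part is a square-free-inducing completion of the fixed leading form. The remaining work (the two floor identities and the observation that only the distinguished $Q$ can drop below $2/(d-1)$) is routine bookkeeping.
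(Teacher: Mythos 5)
Your proposal is correct and takes essentially the same route as the paper: it reads both inclusions off Theorem~\ref{thm:lct of a curve}, translating $\operatorname{mult}_Q(C^1 \cdot E_1) > (d-1)/2$ into the two index ranges for $k_Q$ (using $\operatorname{mult}_Q(C^1 \cdot E_1) = k_Q + 1$ or $k_Q$ according to whether $L_Q$ is a component), and gets the reverse inclusion by constructing examples through Lemma~\ref{lem:relating C and f}. If anything, you are more explicit than the paper, which dismisses both the index bookkeeping and the example construction as easy; your explicit polynomials do work, and their square-freeness follows from the finiteness check you describe (e.g.\ via the Euler relation applied to the two homogeneous pieces).
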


\begin{proof}
If $f \in \mathbb C[x, y]$ is a multiplicity $d-1$ polynomial such that its homogeneous degree $d-1$ part is square-free, then $\operatorname{lct}_{\bm 0}(\mathbb A^2, V(f)) = 2/(d-1)$. This shows that $2/(d-1) \in \Lambda_{d, d-1}$.

In Setting~\ref{set:curve C} by Theorem~\ref{thm:lct of a curve}, we have
\begin{equation} \label{eqn:lct less than 2 over d-1}
\operatorname{lct}_P(\mathbb A^2, C) \leq 2/(d-1)
\end{equation}
and the strict inequality holds in equation~\eqref{eqn:lct less than 2 over d-1} if and only if there exists a point $Q \in C^1$ such that $\operatorname{mult}_Q(C^1 \cdot E_1) > 1$ one of the following holds:
\begin{enumerate}[label=(\theequation), ref=\theequation, itemindent=1.5em]
\let\olditem\item
\renewcommand\item{\stepcounter{equation}\olditem}
\item \label{ite:k lower bound L irred comp} $k_Q \geq \lfloor(d-1)/2\rfloor$ and $L_Q$ is an irreducible component of $C$, or
\item \label{ite:k lower bound otherwise} $k_Q \geq \lfloor(d+1)/2\rfloor$ and $L_Q$ is not an irreducible component of~$C$.
\end{enumerate}
Moreover, if (\ref{ite:k lower bound L irred comp}) or (\ref{ite:k lower bound otherwise}) holds for some $Q \in C^1$, then $\operatorname{lct}_P(\mathbb A^2, C) = l_Q$. Denoting the right-hand side of equation~\eqref{eqn:Lambda d d-1} by $\mathrm{RHS}$, we find $\Lambda_{d, d-1} \subseteq \mathrm{RHS}$. On the other hand, it is easy to construct examples of reduced plane curves satisfying (\ref{ite:k lower bound L irred comp}) or (\ref{ite:k lower bound otherwise}) using Lemma~\ref{lem:relating C and f}. This proves that $\mathrm{RHS} \subseteq \Lambda_{d, d-1}$.
\end{proof}

\begin{remark} \label{thm:lct's are disjoint}
The sets
\begin{equation} \label{eqn:reducible}
\mleft\{ \dfrac{2k + 1}{kd + 1} \;\middle|\; k \in \Bigl\{\bigl\lfloor\frac{d-1}{2}\bigr\rfloor, \ldots, d-2\Bigr\} \mright\}
\end{equation}
and
\[
\mleft\{ \dfrac{2k + 1}{kd} \;\middle|\; k \in \Bigl\{\bigl\lfloor\frac{d+1}{2}\bigr\rfloor, \ldots, d-1\Bigr\} \mright\}
\]
that appear in Corollary~\ref{cor:possible lct's} are disjoint for every integer $d \geq 3$. One implication is that if the log canonical threshold of a plane curve of degree $d \geq 3$ is in the set \eqref{eqn:reducible} above, then the curve is reducible.
\end{remark}

\section{Low degree curves} \label{sec:low degree curves}
In this section, we give an explicit list of all possible values of log canonical thresholds for lower degree curves. For this, we first present the list of all possible singularities that a curve $C$ of degree $d\leq 5$ can contain, in section~\ref{subsec:singularities of low degree curves} and for each of this singularity type, the corresponding normal form is presented in section~\ref{subsec: Measuring singularities using their normal forms}, along with the log canonical thresholds of the pair $(\mathbb{A}^2,C)$ at $O\in C$ with the singularity at the origin $O\in C$.

\subsection{Singularities of low degree curves}\label{subsec:singularities of low degree curves}

\addtolength{\jot}{-0.25\baselineskip}

\begin{table}[h!]
\centering
\caption{Singularities of reduced plane curves of given degree\label{tab:low degree singularities}}
\begin{tabular}{cSc}
  \toprule
  Degree $d$ & Possible singularities\\
  \midrulecolor
  \rowcolor{rowcolor}
  $2$ & $A_1$\\
  $3$ & $A_1, A_2, A_3, D_4$\\
  \rowcolor{rowcolor}
  $4$ & $A_1, \ldots, A_{7},~ D_4, D_5, D_6, E_6, E_7, T_{2,4,4}$\\
  $5$ & $
    \begin{gathered}
      A_1, \ldots, A_{12},~ D_4, \ldots, D_{12},~ E_6, E_7, E_8,~ T_{2,3,6}, \ldots, T_{2,3,10},\\
      T_{2,4,4}, T_{2,4,5}, T_{2,4,6}, T_{2,5,5}, T_{2,5,6}, T_{2,6,6}, Z_{11}, Z_{12}, W_{12}, W_{13}, N_{16}
    \end{gathered}
  $\\
  \bottomrule
\end{tabular}
\end{table}

\addtolength{\jot}{0.25\baselineskip}

\begin{proposition}
Every singularity of every reduced affine plane curve of degree $d \leq 5$ is of one of the types given in row $d$ of Table~\ref{tab:low degree singularities}. Conversely, for every singularity type given in row $d$ of Table~\ref{tab:low degree singularities}, there exists a square-free degree $d$ polynomial $f \in \mathbb C[x, y]$ such that its right equivalence class has non-empty intersection with the $\mu$-constant stratum of the normal form given in Table~\ref{tab:normal form notation} of the singularity.
\end{proposition}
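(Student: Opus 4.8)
The plan is to prove the two implications separately: first that every singularity occurring on a reduced plane curve of degree $d \le 5$ is of a type listed in row $d$ (necessity), and then that every listed type is actually realised by some square-free polynomial of the correct degree (sufficiency). Since the local type of a plane curve singularity is a right-equivalence invariant that is finitely determined, and since for plane curves the $2$-variable representatives of the normal forms may be used throughout (Remark~\ref{rem:stably equiv iff equiv}), both statements reduce to controlling finitely much data, the governing constraint being that the curve has degree~$\le 5$.

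For necessity I would run a case analysis on the multiplicity $m = \operatorname{mult}_P(C)$, which satisfies $m \le d$ because a general line through $P$ meets the projective closure $\overline C$ in $d$ points by B\'ezout, with equality only for $m$ concurrent lines. For $m = 1$ the point is smooth. For $m = 2$ the type is $A_n$, and I would bound $n$ by intersection theory: a single cuspidal branch of type $A_{2k}$ has contact $2k+1$ with its tangent line, so B\'ezout forces $2k+1 \le d$ (this is exactly what excludes $A_4$ for $d = 3$), while a two-branch $A_{2k-1}$ consists of two smooth branches meeting with multiplicity $k$, and B\'ezout, applied either to two components or within one irreducible component of degree $e$ via $\binom{e-1}{2} \ge \delta_P \ge k$, keeps $k$ small. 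For $m = 3$ I would split according to the tangent cone (three distinct lines, a double line plus a line, or a triple line), which separates the $D$, $E$ and the parabolic/hyperbolic $T$, $Z$, $W$ families, and I would bound the length of the resolution using the delta invariant through $\sum_Q \delta_Q \le p_a(\overline C) + (\text{number of components}) - 1$, hence $\delta_P \le \binom{d}{2}$. The remaining multiplicities $m = 4, 5$ leave only a short list terminating in the ordinary $m$-fold points $T_{2,4,4}$ and $N_{16}$, whose Milnor numbers equal $(d-1)^2$ and thus saturate the global bound $\sum_Q \mu_Q \le (d-1)^2$ for reduced plane curves. In each case the resulting local equation is then matched against Arnold's normal forms to read off the type.

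For sufficiency I would exhibit, for each type in row $d$, an explicit square-free polynomial of degree exactly $d$ realising it, verifying the type by finite determinacy. The simplest constructions are unions of lines, conics and cubics arranged with prescribed tangency: nodes and tacnodes from secant and tangent configurations, $A_{2k-1}$ from two smooth components osculating to the maximal B\'ezout order, the ordinary $m$-fold points from $m$ concurrent lines, and the $D$, $E$, $T$, $Z$, $W$ types from suitable combinations whose leading forms and first few higher terms I would fix so as to land in the prescribed $\mu$-constant stratum. Where the normal form itself already has degree $\le d$, its projective closure (made square-free) serves directly.

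The main obstacle is the necessity direction: making the enumeration simultaneously exhaustive (no occurring type overlooked) and sharp (every borderline type that the coarse bounds $\mu \le (d-1)^2$ and $\delta \le \binom{d}{2}$ would permit but that B\'ezout actually forbids is correctly excluded). The secondary difficulty, concentrated at multiplicity $3$ and above, is recognising the Arnold type of a given local equation, that is, computing its right-equivalence class and placing it in the correct $\mu$-constant stratum among the non-simple families $T_{p,q,r}$, $Z_{11}$, $Z_{12}$, $W_{12}$, $W_{13}$ and $N_{16}$; this is where I would rely on the existing classification lists rather than re-deriving each normal form by hand.
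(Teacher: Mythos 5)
Your plan is genuinely different from the paper's proof, which is a one-paragraph appeal to the literature: the exhaustive lists of singularities of reduced quartics and quintics, together with explicit polynomials realising each type, are already contained in \cite[Sections~2--3 and Appendix~A]{WW09} and \cite{Wal96}, and the paper simply cites them. A self-contained enumeration is legitimate in principle, but your necessity argument contains a concrete false step. You claim that a one-branch singularity of type $A_{2k}$ has contact $2k+1$ with its tangent line, so that B\'ezout forces $2k+1 \le d$. This is wrong: the normal form $x^2 + y^{2k+1}$ is reached only after a local \emph{analytic} change of coordinates, which does not preserve lines, so the contact of the branch with its honest tangent line can be far smaller than $2k+1$. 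For instance the branch $x = t^2$, $y = t^4 + t^5$ has contact $4$ with its tangent line $\mathbb V(y)$, yet it is an $A_4$ singularity (set $\tilde y = y - x^2$ to get $\tilde y^2 = x^5(1 + \text{unit})$), and it sits on the irreducible quartic $(y - x^2)^2 = x^3 y$. Your bound would therefore exclude $A_4$ and $A_6$ on quartics and $A_6, A_8, A_{10}, A_{12}$ on quintics, all of which occur, as row $d = 4, 5$ of Table~\ref{tab:low degree singularities} records; the parenthetical that this ``is exactly what excludes $A_4$ for $d=3$'' shows the argument only happens to work for cubics, where any contact $\geq 4$ already violates B\'ezout. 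The correct tool for bounding $k$ is the genus/$\delta$-invariant estimate you invoke elsewhere, not tangent-line B\'ezout.

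Beyond this error, the rest of the necessity direction is a sketch that conceals exactly the hard part: for $d = 5$ one must decide where each series stops ($D_{12}$ but not $D_{13}$, $T_{2,3,10}$ but not $T_{2,3,11}$, $T_{2,5,6}$ and $T_{2,6,6}$ but not $T_{2,5,7}$, and so on), and these sharp exclusions require a branch-by-branch analysis of how components of degrees $1$ through $4$ can meet, which is precisely the content of \cite{Wal96} and \cite{WW09}. Your closing remark that you would ``rely on the existing classification lists rather than re-deriving each normal form by hand'' concedes this point, and at that stage your proof collapses to the paper's citation-based one. To make the proposal into a genuine alternative proof you would need either to carry out that case analysis in full, or to organise it systematically around the bound $\sum_Q \delta_Q \le p_a(\overline C) + (\text{number of components}) - 1$ combined with a classification of branch configurations compatible with each Arnold type.
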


\begin{proof}
The normal forms for $d \leq 5$ are well-known. The lists for $d = 4$ are given in \cite[Section~2]{WW09} and the lists for $d = 5$ are given in \cite[Section~3]{WW09} or \cite{Wal96}. For every normal form~$\Phi$ in the degree $d = 4$ row of Table~\ref{tab:low degree singularities}, \cite[Appendix~A]{WW09} contains an example of a quartic polynomial $f$ such that $f$ belongs to the $\mu$-constant stratum of~$\Phi$. For every normal form~$\Phi$ in the degree $d = 5$ row of Table~\ref{tab:low degree singularities}, \cite[Section~3]{WW09} and its erratum describe all quintic polynomials $f$ such that $f$ belongs to the $\mu$-constant stratum of~$\Phi$.
\end{proof}

\begin{remark}
\cite{WW09} actually proves more, namely the classification of real normal forms. Considered as complex normal forms, the normal forms with a star symbol are the same as without the star, for example $A_k = A_k^*$, $D_k = D_k^*$, $X_9 = X_9^* = X_9^{**}$, etc.
\end{remark}

\subsection{Measuring singularities using their normal forms}\label{subsec: Measuring singularities using their normal forms}

We define normal forms for the $\mu$-constant stratum. Note that in \cite[Section~15.0]{AGZV85}, normal forms are defined more generally for any class of singularities, not only the \mbox{$\mu$-c}onstant stratum, and the image of $\Phi$ is the whole polynomial ring $\mathbb C[x_1, \ldots, x_n]$, not a jet space. The reason it suffices to consider a jet space here is that a convergent power series $f$ of finite Milnor number $\mu(f)$ is $(\mu(f) + 1)$-determined, see \cite[Corollary~I.2.24]{GLS07}.

\begin{definition}[{\cite[Section~15.0]{AGZV85}}] \label{def:normal forms}
Let $n$ be a positive integer, let $m$ be a \mbox{non-n}egative integer and let $f \in \mathbb C\{x_1, \ldots, x_n\}$ have finite Milnor number~$\mu(f)$. The \textbf{$m$-jet} of $f$ is the sum over $k \in \{0, \ldots, m\}$ of the homogeneous degree $k$ parts of~$f$. The \textbf{$m$\mbox{-}jet space}, denoted $\mathbb C[x_1, \ldots, x_n]_{\leq m}$, is the $\mathbb C$-vector space of polynomials in $\mathbb C[x_1, \ldots, x_n]$ of degree at most~$m$. As a $\binom{n+m-1}{m}$-dimensional vector space over~$\mathbb C$, the $m$-jet space has a natural structure of a smooth complex space.
The \textbf{$\mu$-constant stratum of $f$} is the connected component of the $(\mu(f)+1)$-jet space of polynomials with Milnor number $\mu(f)$ which contains the $(\mu(f)+1)$-jet of~$f$. A \textbf{normal form of $f$} is a holomorphic map $\mathbb C^m \to \mathbb C[x_1, \ldots, x_n]_{\leq \mu(f)+1}$ such that all of the following hold:
\begin{enumerate}[label=(\arabic*), ref=\arabic*]
\item $\Phi(\mathbb C^m)$ intersects the right equivalence class of every polynomial in the $\mu$-constant stratum of~$f$,
\item the inverse image under $\Phi$ of every right equivalence class in $\Phi(\mathbb C^m)$ is finite, and
\item the inverse image under $\Phi$ of the complement of the $\mu$-constant stratum of $f$ is contained in a closed analytic proper subset of~$\mathbb C^m$.
\end{enumerate}
A \textbf{normal form} is a holomorphic map $\mathbb C^m \to \mathbb C[x_1, \ldots, x_n]_{\leq k}$, where $k$ is a positive integer, which is a normal form of some polynomial in its image.
A \textbf{polynomial normal form} is a normal form $\Phi$ such that $\Phi$ is a polynomial map.
The \textbf{$\mu$-constant stratum of a normal form $\Phi$} is the $\mu$-constant stratum of a polynomial $f$ such that $\Phi$ is a normal form of~$f$.
\end{definition}

All of the normal forms below are polynomial normal forms. Table~\ref{tab:normal form notation} contains the notation from \cite[Sections~15]{AGZV85} (or \cite[Section~13]{Arn75}) for the normal forms that we use. In Table~\ref{tab:normal form notation}, $a, b$ and $c$ are complex numbers, $k, q$ and $r$ are positive integers, $\operatorname{mult}$ stands for multiplicity, $\mu$ for Milnor number, $\operatorname{lct}$ for log canonical threshold, \emph{restrictions} describes the domain of the indices and \emph{$\mu$-constant stratum} describes the intersection of the image and the $\mu$-constant stratum of the normal form.

\newlength\MuConstantWidth
\settowidth{\MuConstantWidth}{$\mu$-constant}
\begin{table}[h!]
\centering
\caption{Notation for normal forms\label{tab:normal form notation}}
\begin{tabular}{*{4}{l}ccScc}
  \toprule
  Symbol & Indices & Normal form & \begin{minipage}{\MuConstantWidth}$\mu$-constant\\stratum\vspace{0.2em}\end{minipage} & $\operatorname{mult}$ & $\mu$ & $\mathrm{lct}$\\
  \midrulecolor
  \rowcolor{rowcolor}
  $A_k$ & $k \geq 1$ & $x^2 + y^{k+1}$ && $2$ & $k$ & $\frac{k+3}{2(k+1)}$\\
  $D_k$ & $k \geq 4$ & $x^2 y + y^{k-1}$ && $3$ & $k$ & $\frac{k}{2(k-1)}$\\
  \rowcolor{rowcolor}
  $E_6$ && $x^3 + y^4$ && $3$ & $6$ & $\frac{7}{12}$\\
  $E_7$ && $x^3 + x y^3$ && $3$ & $7$ & $\frac{5}{9}$\\
  \rowcolor{rowcolor}
  $E_8$ && $x^3 + y^5$ && $3$ & $8$ & $\frac{8}{15}$\\
  $T_{2,3,6}$ && $a x^2 y^2 + x^3 + y^6$ & $4a^3 + 27 \neq 0$ & $3$ & $10$ & $\frac{1}{2}$\\
  \rowcolor{rowcolor}
  $T_{2,4,4}$ && $a x^2 y^2 + x^4 + y^4$ & $a^2 \neq 4$ & $4$ & $9$ & $\frac{1}{2}$\\
  $T_{2,q,r}$ & $\frac{1}{q} + \frac{1}{r} < \frac{1}{2}$ & $a x^2 y^2 + x^q + y^r$ & $a \neq 0$ & $4$ & $q+r+1$ & $\frac{1}{2}$\\
  \rowcolor{rowcolor}
  $Z_{11}$ && $x^3 y + y^5 + a x y^4$ && $4$ & $11$ & $\frac{7}{15}$\\
  $Z_{12}$ && $x^3 y + x y^4 + a x^2 y^3$ && $4$ & $12$ & $\frac{5}{11}$\\
  \rowcolor{rowcolor}
  $W_{12}$ && $x^4 + y^5 + a x^2 y^3$ && $4$ & $12$ & $\frac{9}{20}$\\
  $W_{13}$ && $x^4 + x y^4 + a y^6$ && $4$ & $13$ & $\frac{7}{16}$\\
  \rowcolor{rowcolor}
  $N_{16}$ && $\begin{aligned}x^5 & + a x^3 y^2 + b x^2 y^3\\& + y^5 + c x^3 y^3\end{aligned}$ & $f_5$ square-free & $5$ & $16$ & $\frac{2}{5}$\\
  \bottomrulecolor
\end{tabular}
\end{table}

\begin{remark} \label{rem:multiple notations}
\begin{enumerate}[label=(\alph*), ref=\alph*]
\item We have added the polynomial for $N_{16}$ in Table~\ref{tab:normal form notation} which does not appear in \cite[Sections~15]{AGZV85}. The polynomial for $N_{16}$ defines a normal form by \cite[Theorem~3.20]{BMP20}. By \cite[Exercise~I.2.1.5]{GLS07}\footnote{There is an error in the exercise, it should say $\mu(f) = d(d-1) - k + 1$ instead of $\mu(f) = d(d-1) - k$.}, the $\mu$-constant stratum of $N_{16}$ is the open dense subset where the homogeneous degree $5$ part is a product of five pairwise coprime linear forms.
\item By \emph{$A_k$ singularity}, \emph{$D_k$ singularity}, \ldots, \emph{$N_{16}$ singularity}, we mean a complex space germ $(X, P)$ isomorphic to a complex space subgerm $(\mathbb V(f), \bm 0)$ of $(\mathbb C^n, \bm 0)$ where the stable right equivalence class of $f \in \mathbb C\{x_1, \ldots, x_n\}$ contains a polynomial which is in the $\mu$-constant stratum of respectively $A_k, D_k, \ldots, N_{16}$.
\item \label{itm:multiple notations} Normal forms are usually considered up to stable right equivalence, meaning that if $f$ and $g$ are stably right equivalent, then the normal forms of $f$ and $g$ are considered to be the same. Due to this, there are several different notations for some of the normal forms in Table~\ref{tab:normal form notation}:
\begin{enumerate}[label=(\arabic*), ref=\arabic*]
\item $T_{2, 3, 6+k} = J_{10+k} = J_{2, k}$ for all nonnegative integers~$k$,
\item $T_{2, 4, 4+k} = X_{9+k} = X_{1, k}$ for all nonnegative integers~$k$,
\item $T_{2, 4+r, 4+s} = Y_{4+r, 4+s} = Y_{r, s}^1$ for all positive integers $r$ and~$s$.
\end{enumerate}
\end{enumerate}
\end{remark}

\begin{lemma} \label{thm:lct's and Milnor numbers}
Let $f$ be one of the polynomials in the column \emph{normal form} in Table~\ref{tab:normal form notation}, satisfying the corresponding restrictions in column \emph{$\mu$-constant stratum}. Then $f$ has multiplicity~$\operatorname{mult}$ and Milnor number $\mu$ and $(\mathbb A^2, f)$ has log canonical threshold $\mathrm{lct}$ at the origin as given in Table~\ref{tab:normal form notation}.
\end{lemma}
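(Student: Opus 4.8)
The plan is to verify each of the three asserted quantities --- multiplicity, Milnor number, and log canonical threshold --- for every row of Table~\ref{tab:normal form notation}, exploiting the quasihomogeneous (or semiquasihomogeneous) structure of the listed normal forms. The multiplicity is immediate: for each $f$ one simply reads off the lowest-degree monomial appearing with nonzero coefficient, which directly gives $\operatorname{mult}$ via the definition in the Notation. For the Milnor number, I would compute $\mu(f) = \dim_\mathbb C \mathbb C\{x, y\}/(\partial_x f, \partial_y f)$ directly for each normal form. Since every entry is either quasihomogeneous or semiquasihomogeneous, the Jacobian ideal is generated by monomials up to higher-order terms, and a monomial basis for the quotient can be exhibited explicitly; for quasihomogeneous $f$ one has the closed formula $\mu = \prod_i (1/w_i - 1)$ in terms of the weights $w_i$ normalised so that $\operatorname{wt}(f) = 1$. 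The $T_{2,q,r}$ and $N_{16}$ rows require checking that the generic-parameter restriction in the $\mu$-constant stratum column is exactly what guarantees the stated $\mu$, and for these I would confirm that the listed leading term defines an isolated singularity under that restriction, so that $\mu$ is finite and equals the stated value.

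For the log canonical thresholds, the main tool is Lemma~\ref{lem:computing lct}. For each row I would assign weights $\bm w = (w_1, w_2)$ making $f$ (semi)quasihomogeneous with leading term $f_{\bm w}$, and set $b := (w_1 + w_2)/\operatorname{wt}(f)$. The lemma gives $\operatorname{lct}_{\bm 0}(\mathbb A^2, \mathbb V(f)) \leq b$ unconditionally, with equality provided the pair $(\mathbb C^2, b\,\mathbb V(f_{\bm w}))$ is log canonical away from the origin. I would then check in each case that this candidate value of $b$ matches the tabulated $\mathrm{lct}$, so the task reduces to verifying the away-from-origin log canonicity of the weighted-leading-term pair. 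For the quasihomogeneous rows ($A_k$, $D_k$, $E_6$, $E_7$, $E_8$), $f = f_{\bm w}$ and the computation of $b$ is a direct substitution of the normalised weights. For the semiquasihomogeneous rows ($T_{2,q,r}$, the $Z$, $W$, $N$ series), the leading term $f_{\bm w}$ is the weight-minimal part, and I must confirm both that $f$ is genuinely semiquasihomogeneous under the stratum restriction and that $b$ equals the listed value.

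The main obstacle will be verifying, for the semiquasihomogeneous entries, that $(\mathbb C^2, b\,\mathbb V(f_{\bm w}))$ is log canonical outside the origin --- in other words, that each branch of the weighted leading term, away from $\bm 0$, is cut out with multiplicity at most $1/b$ along it, so that no coefficient in a log resolution exceeds~$1$. For the $T_{2,q,r}$ family with leading term $a x^2 y^2$, this amounts to checking that the two coordinate axes each appear with the correct multiplicity and that $b = 1/2$ is forced; for $N_{16}$, whose leading term $f_5$ is a product of five pairwise-distinct lines through the origin under the square-free restriction, log canonicity away from $\bm 0$ is clear since each line is smooth and reduced, giving $b = 2/5$. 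A subtlety worth flagging is that several normal forms are presented as curve singularities stably right equivalent to surface or higher singularities bearing the same symbol (Remark~\ref{rem:stably equiv iff equiv}); since log canonical threshold at a point is a local analytic invariant (by the remark following Definition~\ref{def:lct}) and is preserved under right equivalence, it suffices to compute with the two-variable representative listed in the table, and no passage between different numbers of variables is needed. Finally, I would cross-check a few values against the classical formulas for quasihomogeneous singularities (e.g.\ the $A_k$ value $\tfrac{k+3}{2(k+1)} = 1/(k+1) + 1/2$) to catch arithmetic slips.
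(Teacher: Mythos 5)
Your overall strategy coincides with the paper's: read off the multiplicities, get the Milnor numbers from combinatorial formulas attached to the weighted structure, and get the log canonical thresholds from Lemma~\ref{lem:computing lct} by checking log canonicity of the weighted leading term away from the origin. However, there is a genuine gap in your treatment of the row $T_{2,q,r}$ with $\frac{1}{q}+\frac{1}{r}<\frac{1}{2}$ and $a\neq 0$. Your plan rests on the claim that ``every entry is either quasihomogeneous or semiquasihomogeneous'' and on being able to ``confirm that the listed leading term defines an isolated singularity under that restriction''. For these entries that confirmation is impossible: since $\frac{1}{q}+\frac{1}{r}<\frac{1}{2}$, the lattice point $(2,2)$ lies strictly below the segment joining $(q,0)$ and $(0,r)$, so for any choice of positive weights the weight-minimal part of $a x^2y^2+x^q+y^r$ is one of $ax^2y^2$, $x^q$, $y^r$, $x^2(ay^2+qx^{q-2}/(2a))$-type combinations --- concretely, one of $ax^2y^2$, $x^q$, $y^r$, $ax^2y^2+x^q=x^2(ay^2+x^{q-2})$, $ax^2y^2+y^r=y^2(ax^2+y^{r-2})$ --- and $x^q+y^r$ alone can never be minimal. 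Every one of these is divisible by the square of a coordinate and hence defines a \emph{non-isolated} singularity, so $f$ is not semiquasihomogeneous for any weights. Consequently the product formula $\mu=\prod_i(1/w_i-1)$ does not apply, and your route to $\mu=q+r+1$ breaks down; the fallback of ``exhibiting a monomial basis'' is not justified either, because the Jacobian ideal $\bigl(x(2ay^2+qx^{q-2}),\, y(2ax^2+ry^{r-2})\bigr)$ is not monomially generated up to higher-order terms. This is exactly the point where the paper switches tools: it observes that the $T_{2,q,r}$ power series are Newton non-degenerate and invokes the Kouchnirenko-type formula of \cite[Corollary~I.2.18]{GLS07} for the Milnor number, reserving the semiquasihomogeneous formula \cite[Proposition~I.2.16]{GLS07} for the remaining rows.

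Two smaller points. First, your dichotomy also misfiles $T_{2,3,6}$ and $T_{2,4,4}$: there $\frac{1}{q}+\frac{1}{r}=\frac{1}{2}$, the parameter $a$ is allowed to vanish, and the polynomials are genuinely quasihomogeneous, so the leading term is the whole polynomial rather than $ax^2y^2$; the stratum restrictions $4a^3+27\neq0$ and $a^2\neq4$ are what make that leading term reduced (a union of smooth branches meeting only at the origin), which is what both the Milnor number and the outside-the-origin log canonicity check require. Second, your lct computation for the general $T_{2,q,r}$ does survive, because Lemma~\ref{lem:computing lct} imposes no isolatedness hypothesis: choosing weights with $2w_2<(q-2)w_1$ and $2w_1<(r-2)w_2$ (possible precisely when $\frac{1}{q}+\frac{1}{r}<\frac{1}{2}$) gives $f_{\bm w}=ax^2y^2$, $b=\frac{1}{2}$, and $\bigl(\mathbb C^2,\,\mathbb{V}(x)+\mathbb{V}(y)\bigr)$ is log canonical, whence $\operatorname{lct}_{\bm 0}=\frac{1}{2}$; the paper instead takes weights $(2,q-2)$ with leading term $x^2(ay^2+x^{q-2})$, to the same effect. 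So the repair needed is confined to the Milnor number argument for the $T_{2,q,r}$ family, via Newton non-degeneracy.
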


\begin{proof}
The power series for $T_{2,q,r}$ are Newton non-degenerate and the power series for the other singularities in Table~\ref{tab:normal form notation} are semiquasihomogeneous. There are combinatorial formulas for the Milnor number in these cases, see \cite[Proposition~I.2.16 and Corollary~I.2.18]{GLS07}.

Choose the weights $(2, q-2)$ for $(x, y)$ and let $f$ be a power series for $T_{2,q,r}$ in Table~\ref{tab:normal form notation}. Since the pair $(\mathbb C^2,\, \frac12 \mathbb{V}(ax^2 y^2 + x^q))$ is log canonical outside the origin, by Lemma~\ref{lem:computing lct} the log canonical threshold of $f$ at the origin is~$\frac12$. The power series for all the other singularities in Table~\ref{tab:normal form notation} are semiquasihomogeneous and Lemma~\ref{lem:computing lct} gives a combinatorial formula for the log canonical threshold.
\end{proof}

\providecommand{\bysame}{\leavevmode\hbox to3em{\hrulefill}\thinspace}
\providecommand{\MR}{\relax\ifhmode\unskip\space\fi MR }
\providecommand{\MRhref}[2]{%
  \href{http://www.ams.org/mathscinet-getitem?mr=#1}{#2}
}
\providecommand{\href}[2]{#2}

\vspace{0.5\baselineskip}
\noindent\begin{minipage}{\linewidth}
  \noindent\begin{tabular}[t]{@{}l}
    Erik Paemurru\quad \texttt{algebraic.geometry@runbox.com}\\
  \end{tabular}\\
  Institute of Mathematics and Informatics, Bulgarian Academy of Sciences, Acad.\ G.~Bonchev Str.\ bl.~8, 1113, Sofia, Bulgaria.\\
  \textit{Former addresses:} Mathematik und Informatik, Universität des Saarlandes, 66123 Saarbrücken, Germany.\\
  Department of Mathematics, University of Miami, Coral Gables, Florida 33146, USA.%
\end{minipage}

\vspace{\baselineskip}%

\noindent\begin{minipage}{\linewidth}
  \noindent\begin{tabular}[t]{@{}l}
    Nivedita Viswanathan\quad \texttt{n.viswanathan@qmul.ac.uk}\\
  \end{tabular}\\
  School of Physical and Chemical Sciences, Queen Mary University of London, London, E1 4NS.\\
  \textit{Former Address:} Department of Mathematics, Brunel University London, Kingston Lane, Uxbridge, United Kingdom UB8 3PH.%
\end{minipage}

\end{document}